\newtheorem{Thm}{Theorem}
\newtheorem{Lem}{Lemma}
\newtheorem{Rmk}{Remark}
\newtheorem{Def}{Definition}
\newtheorem{Property}{Property}
\newtheorem{Prob}{Problem}   
\newtheorem{Asm}{Assumption} 
\newenvironment{proof}{\textit{Proof:}}{\hfill$\square$}
\newcommand{\Nh}{n}
\newcommand{\R}{\mathbb{R}}
\newcommand{\E}{\mathbb{E}}
\newcommand{\KL}{\text{KL}}
\newcommand{\Ipdf}{g}
\newcommand{\constrVecSymb}{\mathbf{C}}
\newcommand{\constrVec}[2]{\constrVecSymb_{#1}^{#2}}
\newcommand{\eqlmarg}       {\hspace{3mm}}   
\def\ps@copyright{%
 \let\@oddhead\@empty
 \let\@evenhead\@empty
 \def\@oddfoot{}%
 \let\@evenfoot\@oddfoot}
\begin{document}
\begin{frontmatter}

\title{On a probabilistic approach to synthesize control policies from example datasets\thanksref{footnoteinfo}\thanksref{footnoteinfo2}} 

\thanks[footnoteinfo]{{\bf This is an authors’ version of the work that is published in Automatica, Vol. 137,  2022. Changes were made to this version by the publisher prior to publication. The final version of record is available at https://doi.org/10.1016/j.automatica.2021.110121}}

\thanks[footnoteinfo2]{This publication has emanated from research conducted with the financial support of Science Foundation Ireland under Grant number 16/ RC/3872}

\author{Davide Gagliardi}$^{1}$, 
\author[Second]{Giovanni Russo}

\address[Second]{Department of Information and Electrical Engineering and Applied Mathematics, University of Salerno, Italy (e-mail: giovarusso@unisa.it)}
\thanks{Work done in part while at the School of Electrical and Electronic Engineering, University College Dublin, Ireland.}

\begin{abstract}
This paper is concerned with the design of control policies from example \textcolor{black}{datasets}. The case considered is when just a black box description of the system to be controlled is available and the system is affected by actuation constraints. These constraints are not necessarily fulfilled by the (possibly, noisy) example \textcolor{black}{data} and the system under control is not necessarily the same as the one from which \textcolor{black}{these data are} collected. In this context,  we introduce a number of theoretical  results to compute a control policy from \textcolor{black}{example datasets} that: (i) makes the behavior of the closed-loop system similar to the one illustrated in the \textcolor{black}{data}; (ii) guarantees compliance with the  constraints.  \textcolor{black}{We recast the control problem as a finite-horizon optimal control problem} and give an explicit expression for its optimal solution. \textcolor{black}{Moreover, } we turn our findings into an algorithmic procedure. The procedure gives a systematic tool to compute the policy. The effectiveness of our approach is illustrated via a numerical example, where we use real data collected from test drives to synthesize a control policy for the merging of a car on a highway.
\end{abstract}
 
\end{frontmatter}

\section{Introduction}

Model-based control is a key paradigm to design control systems: the design of platoons, fault tolerant and biochemical systems are just few of the frontier applications where this approach has been successfully used. Unfortunately, detailed mathematical models in the form of e.g. differential/difference equations, are not always available and, when available, can be hard to identify. Hence, a paradigm that is becoming increasingly popular is to synthesize control policies  directly from data, see e.g. \citep{8933093,8960476,HOU20133} and references therein. As noted in these papers, this approach aims at designing policies while bypassing the need to devise/identify a mathematical model and can be useful in applications where first-principle models cannot be obtained and/or system identification is too computationally expensive.  

An appealing framework to design controllers from data is that of using \textcolor{black}{example datasets}. This {\em learning from demonstrations} approach involves learning actions by observing an expert \citep{Han_lIU_zHU_Pas_19,8550288,10.1371/journal.pcbi.1007452}. In this context, we consider the key challenge of designing control policies from noisy example datasets for systems affected by actuation constraints.   Motivated by this, \textcolor{black}{we present\footnote{An early version of the results presented at the 21st IFAC World Congress \citep{Gagliardi_D_et_Russo_G_IFAC2020_extended_Arxiv}.} a set of technical results to synthesize policies directly from examples that might not satisfy the actuation constraints}.  \textcolor{black}{Situations where our results are of interest naturally arise in the context of  e.g. autonomous urban driving, where one is often interested in designing policies from driving examples, while ensuring that the control (e.g. speed, acceleration) signal fulfills certain properties with some acceptable/design probability level \citep{6760249}.} We now survey some related works on data-driven control and learning from \textcolor{black}{examples}.

\noindent {\bf Data-driven control.} As noted in e.g. \citep{8933093,8960476}, work on data-driven control can be traced back to \citep{10.1115/1.2899060} and their results on the tuning of PID controllers. Recently, driven by the explosion in the {\em amount} of available data, the problem of finding control policies from datasets has gained increasing attention, see e.g. \citep{Hou_09}. For example,  \textcolor{black}{\citep{8960476} studies data-driven control in LTI systems when the data available are not persistently exciting and where, therefore, it is impossible to perform subspace identification.} Recent works also include \citep{TANASKOVIC20171}, where a direct data-driven design approach is introduced for discrete-time stabilizable systems with Lipschitz nonlinearities and e.g.  \citep{7068299,8453019,8703172} that consider the problem of designing controllers for systems that have an underlying linear dynamics. \textcolor{black}{We also recall here \citep{8933093} that derives a parametrization of linear feedback systems using data-dependent linear matrix inequalities.} Besides works on e.g. data-based tuning of PID controllers \citep{4610025} other remarkable results have been obtained by taking inspiration from the rich literature on Model Predictive Control (MPC). These include \citep{8039204}, where an MPC learning algorithm is introduced for iterative tasks when the system dynamics is partially known, \citep{SALVADOR2018356} where a data-based predictive control algorithm is presented for unknown time-invariant systems and \textcolor{black}{\citep{8795639,9028943}} that, by taking a behavioral systems perspective, introduce a data-enabled predictive control algorithm for data generated by LTI systems. 


\noindent {\bf Learning from \textcolor{black}{example datasets}.} At their roots, { learning from demonstration techniques} largely rely on inverse optimal control \citep{506395}. Nowadays, these techniques\textcolor{black}{, which essentially aim at finding/improving a policy from an initial demonstration dataset \citep{9317713},} are recognized as a convenient framework to learn parametrized policies from {\em success stories} \citep{ARGALL2009469} and potential applications include planning \citep{doi:10.1177/0278364917745980} and medical prescriptions learning \citep{8796280}. There is then no surprise that, over the years, a number of techniques have been developed to tackle the  problem of learning parametrized control policies from demonstrations, mainly in the context of Markov Decision Processes. Results include \textcolor{black}{\citep{Han_lIU_zHU_Pas_19}, where parametrized policies consistent with data for a Markov decision process are learned via regularized logistic regression and regret bounds are given,} \citep{Ratliff2009}, which leverages a linear programming approach, \citep{Ratliff:2006:MMP:1143844.1143936} which relies on a maximum margin approach, \citep{ziebart2008maximum} that makes use of the maximum entropy principle, \citep{Ramachandran:2007:BIR:1625275.1625692} that formalizes the problem via Bayesian statistics and  \textcolor{black}{ \citep{10.1145/1015330.1015430}, where rewards are learned by expressing them as a combination of known features}.  \textcolor{black}{We also recall the recent \citep{pmlr-v97-edwards19a}, where an approach based on characterizing causal effects of latent actions is proposed to tackle imitation learning problems.} A Bayesian approach \citep{Peterka_V_Bayesian_Approach_to_sys_ident_1981} to dynamical systems is also at the basis of works such as \citep{Karny_M_Automatica_1996_Towards_Fully_Prob,Karny_M+Guy_TV_Sys&Ctr_lett_2006,Herzallah_R_JNeurNet_2015,Pegueroles_G+Russo_G_ECC19_confid,10.1007/978-3-030-01713-2_20,KARNY2012105} \textcolor{black}{see also \citep{Kappen_2012,Todorov11478,NIPS2006_d806ca13} and references therein}, which formalize \textcolor{black}{(via the so-called Kullback-Leibler divergence, see Section \ref{sec:KL_divergence} for the definition)} the control problem as the problem of minimizing a cost that captures the discrepancy between an ideal probability density function and the actual probability density function of the system under control.  
\textcolor{black}{Further, we recall \citep{6716965}, which also consider these Kullback-Leibler control problems without constraints.  In  such a work,   by leveraging an average cost formulation, the probability mass function for the state transitions is found. } {Finally, we also recall  \citep{9244209,9446558}, where policies are obtained from the minimization of similar costs by leveraging multiple, specialized,  data sources.} 

\subsection*{Contributions of this paper}
We introduce a framework to design policies from example datasets for systems having actuation constraints and for which just a probabilistic description is known. \textcolor{black}{With our results, we aim at synthesizing control policies that make the behavior of the closed loop system {\em similar} (in a sense  defined in Section \ref{sec:control_problem}) to the one seen in the example \textcolor{black}{data}, while still fulfilling the system-specific actuation constraints. To the best of our knowledge, the formulation presented here is novel and these are the first results that, simultaneously: (i) do not require assumptions on the linearity of the underlying stochastic dynamics that is generating the data; (ii) allow for the example \textcolor{black}{data} to be noisy and collected  from a system that is not necessarily the same as the one that is under control; (iii) do not need the actuation constraints  to be necessarily fulfilled in the example \textcolor{black}{data}; (iv) do not need an a-priori parametrization of the policy.} Our key \textcolor{black}{technical} contributions are summarized as follows: 
\begin{itemize}
\item \textcolor{black}{we recast the control problem as the problem of {\em reshaping} certain probability density functions that can be obtained from data.  This leads to formulate a constrained infinite dimensional, finite-horizon, optimal control problem.  The decision variables of the  problem are probability densities  and the constraints are linear functionals in these densities}; 
\item we introduce a number of theoretical results to tackle the control problem.  \textcolor{black}{Specifically, we prove that the finite-horizon optimal control problem can be split into infinite-dimensional sub-problems, with each of these sub-problems  having a probability density (i.e. the control policy) as decision variable.  After showing that each of the sub-problems is convex,  we solve them explicitly.  By leveraging this, we show that the control problem can be solved iteratively via a backward recursion. This leads to an explicit expression of the optimal policy};
\item the results are turned into an algorithmic procedure \textcolor{black}{to compute  the  optimal control policy};
\item  we illustrate the effectiveness of the results via a numerical example that involves the use of real data.
\end{itemize}
\textcolor{black}{The paper is organized as follows. After introducing some mathematical preliminaries in Section \ref{ss:Notat}, we formalize the control problem (Problem \ref{prob:Main_Constr_Ctrl}, in Section \ref{ss:Assumpt}). This problem is solved with the technical results presented in Section \ref{sec:results} (some of the proofs are in the appendix).  Specifically, in this section we first introduce, and discuss, an auxiliary problem (i.e. Problem \ref{prob:Lemma_1}). With Lemma \ref{lem:Constrained_KL} we then give an explicit expression for the optimal solution of Problem \ref{prob:Lemma_1} and this result is used iteratively to prove Theorem \ref{theo:ctrlConstr}, which gives the optimal solution to Problem \ref{prob:Main_Constr_Ctrl}.  We conclude Section \ref{sec:results} by highlighting a connection between our approach and maximum entropy. The results are turned into an algorithmic procedure (Section \ref{sec:algorithm}) and a numerical example is presented in Section \ref{sec:example}. Concluding remarks are given in Section \ref{sec:conclusions}. }
\section{Mathematical Preliminaries} \label{ss:Notat}

Sets, as well as operators, are denoted by {\em calligraphic} characters, while vector quantities are denoted in {\bf bold}.   \textcolor{black}{Random (row) vectors (i.e. a multidimensional random variables) are denoted by upper-case bold letters and their realization is denoted by lower-case bold letters.} For example, $\mathbf{Z}$ denotes a multi-dimensional random variable and its realization is denoted by $\mathbf{z}$. The \textit{probability density function} (or simply \textit{pdf} in what follows) of a continuous $\mathbf{Z}$ is denoted by $\textcolor{black}{f(\mathbf{z})}$.  The support of $\textcolor{black}{f(\mathbf{z})}$ is denoted by $\mathcal{S}\left(\textcolor{black}{f}\right)$ and, analogously, the  expectation of a function $\mathbf{h}(\cdot)$ of $\mathbf{Z}$ is indicated with $\E_{\textcolor{black}{f}}[\mathbf{h}(\mathbf{Z})]$ 
and defined as $\E_{{{f}}}[\mathbf{h}(\mathbf{Z})] := \int_{\mathcal{S}\left(\textcolor{black}{f}\right)}\mathbf{h}(\mathbf{z})f(\mathbf{z})d\mathbf{z}$.  For notational convenience,  {whenever it is clear from the context, we omit the  domain of integration in the integral}. We also remark here that: (i) whenever we apply the averaging operator to a given function, we use an upper-case letter for the function argument as this is a random vector; (ii) to stress the linearity of certain functionals or operators with respect to a specific argument, we include that argument in square brackets. 
The \textit{joint} pdf of two \textcolor{black}{(non independent)} random vectors, say $\mathbf{Z}$ and $\mathbf{Y}$, is denoted by  $f(\mathbf{z,y})$.
The \textit{conditional} probability density function (or {\em cpdf} in what follows) of $\mathbf{Z}$ with respect to $\mathbf{Y}$ is denoted by $f\left( \mathbf{z}| \mathbf{y} \right)$ and, whenever the context is clear, we use the shorthand notation $\tilde{f}_\mathbf{Z}$. Finally, given $\mathcal{Z}\subseteq\R^{\textcolor{black}{n}_z}$, its \textit{indicator function} is denoted by $\mathds{1}_{\mathcal{Z}}(\mathbf{z})$. That is, $\mathds{1}_{\mathcal{Z}}(\mathbf{z}) =1$, $\forall \mathbf{z} \in \mathcal{Z}$ and $0$ otherwise. 
We also make use of the internal product between tensors, which is denoted by $\langle  \cdot, \cdot \rangle$, while $\mathcal{A}\setminus\mathcal{B}$ is the set difference between $\mathcal{A}$ and $\mathcal{B}$. 
We indicate (ordered) countable sets as $\lbrace w_k \rbrace_{\mathcal{K}}:= \lbrace w_k \rbrace_{k = k_1}^{k_n}$, where $w_k$ is the generic element belonging to the set and $k_1$, $k_n$ are the indices of the first and last element respectively.  

\subsection{The Kullback-Leibler divergence}\label{sec:KL_divergence}
The control problem \textcolor{black}{is} stated (see Section \ref{sec:control_problem}) in terms of the Kullback-Leibler (KL, \citep{KL_51}) divergence, formalized with the following:

\begin{Def}\label{def:KLdiv}	
Consider two pdfs, \textcolor{black}{$\phi(\mathbf{z})$} and \textcolor{black}{$g(\mathbf{z})$,} with the former being absolutely continuous with respect to the latter. Then, the \KL-divergence of $\phi(\mathbf{z})$ with respect to $g(\mathbf{z})$, is $	\mathcal{D}_{\KL}
	\left(\phi(\mathbf{z}) ||g(\mathbf{z}) \right):
	= \int_{\mathcal{S}(\phi)} \phi(\mathbf{z}) \; \ln\left( \frac{\phi(\mathbf{z})}{g(\mathbf{z})}\right)\,d\mathbf{z}$.
\end{Def}
Intuitively,  \textcolor{black}{$\mathcal{D}_{\KL}\left(\phi(\mathbf{z}) ||g(\mathbf{z}) \right)$} is a measure of the proximity of the pair of pdfs,  \textcolor{black}{$\phi(\mathbf{z})$} and \textcolor{black}{$g(\mathbf{z})$}. We now give a property of the KL-divergence\textcolor{black}{, also known as {\em chain rule},} used in the proof of Theorem \ref{theo:ctrlConstr}.
\begin{Property}\label{proper:KLsplit} Let $\mathbf{Z}$ and $\mathbf{Y}$ be two random vectors and let \textcolor{black}{$\phi(\mathbf{y},\mathbf{z})$} and \textcolor{black}{$g(\mathbf{y},\mathbf{z})$} be two joint pdfs.  Then, the following identity holds:
\textcolor{black}{$
		\mathcal{D}_{\KL}	
		\left(
		\phi (\mathbf{y},\mathbf{z}) || \, g (\mathbf{y},\mathbf{z}) 
		\right) 	=		\mathcal{D}_{\KL}
		\left( \phi (\mathbf{y}) || \, g (\mathbf{y} ) \right) + 
		\mathbb{E}_{ \phi  }
		\left[	
		\mathcal{D}_{\KL} \left( 	\phi(\mathbf{z}|\mathbf{Y}) || \, g(\mathbf{z}|\mathbf{Y}) \right)
		\right]$.}

\proof 
\textcolor{black}{ the chain rule can be found in e.g. \citep{10.5555/1146355} and its} proof follows from the definition of $\mathcal{D}_{\KL}$, the \textit{conditioning} and \textit{independence} rules for pdfs.  \textcolor{black}{We give a self-contained} proof in the appendix. 
\qed 	
\end{Property}

\section{Formulation of the Control Problem}
\label{ss:Assumpt}

Let: (i) $\mathcal{K} :=\lbrace k \rbrace_{k=1}^n$, $\mathcal{K}_0 := \mathcal{K} \cup \lbrace 0 \rbrace$ and $\mathcal{T}:= \lbrace t_k:  k \in\mathcal{K}_0\rbrace$; (ii) $\mathbf{x}_k\in\R^{\textcolor{black}{n}_x}$  and $\mathbf{u}_k\in\R^{\textcolor{black}{n}_u}$  be, respectively, the system state and input at time $t_k\in\mathcal{T}$; (ii) $\textcolor{black}{\boldsymbol{\Delta}}_k:=(\mathbf{x}_k,\mathbf{u}_k)$ be the \textcolor{black}{dataset} collected from the system at time $t_k\in\mathcal{T}$ and  $\textcolor{black}{\boldsymbol{\Delta}}^k$ the \textcolor{black}{dataset} collected from $t_0\in\mathcal{T}$ up to time $t_k\in\mathcal{T}$ ($t_k>t_0$). See also \citep{Peterka_V_Bayesian_Approach_to_sys_ident_1981}, where it is  shown that the system behavior can be described via the joint pdf of the observed \textcolor{black}{dataset}, say $f(\textcolor{black}{\boldsymbol{\Delta}}^n)$. Moreover, \textcolor{black}{by making the standard assumption that Markov's property holds \citep{Karny_M_Automatica_1996_Towards_Fully_Prob},} the chain rule for pdfs leads to the following factorization for $f(\textcolor{black}{\boldsymbol{\Delta}}^n)$:
\begin{equation}
\label{eq:CL_red_v1}
f\left( \textcolor{black}{\boldsymbol{\Delta}}^n \right)  
=
\prod_{k\in \mathcal{K}} 
f	\left(	\mathbf{x}_k | \mathbf{u}_k, \mathbf{x}_{k-1} \right)
f	\left( \mathbf{u}_k | \mathbf{x}_{k-1} \right) 
f 	\left(	\mathbf{x}_0 	\right).
\end{equation}
Throughout this work we refer to (\ref{eq:CL_red_v1}) as the \textit{probabilistic description of the closed loop system}, or we simply say that (\ref{eq:CL_red_v1}) is our\textit{ closed loop} system. 

\begin{Rmk}
The cpdf $f\left(\mathbf{x}_k | \mathbf{u}_k, \mathbf{x}_{k-1} \right)$  describes the system behavior at time $t_k$, given the previous state and the input at time $t_k$.  The input is generated by the \textcolor{black}{cpdf} $f	\left( \mathbf{u}_k | \mathbf{x}_{k-1} \right)$. This is a {\em randomized control policy} returning the input given the previous state. Note that the initial conditions are embedded in the probabilistic system description through the prior $f \left(	\mathbf{x}_0 \right)$.
\end{Rmk}

In the rest of the paper we use the following \textit{shorthand} notations: $\tilde f_{\mathbf{X}}^k:=f	\left(	\mathbf{x}_k | \mathbf{u}_k, \mathbf{x}_{k-1} \right)$, 
$\tilde f_{\mathbf{U}}^k:=f	\left( \mathbf{u}_k | \mathbf{x}_{k-1} \right)$, $\tilde f^k:=\tilde f_{\mathbf{X}}^k\tilde f_{\mathbf{U}}^k$, $f_0:=f 	\left(	\mathbf{x}_0\right)$ and $f^n:= f\left( \textcolor{black}{\boldsymbol{\Delta}}^n \right)$. Hence, (\ref{eq:CL_red_v1}) can be compactly written as
\textcolor{black}{\begin{equation}
\label{eq:CL_red_v1_short}
f^n
=
\left(\prod_{k\in \mathcal{K}} \tilde f_{\mathbf{X}}^k\tilde f_{\mathbf{U}}^k\right)f_0 .
\end{equation}}

\subsection{The control problem}\label{sec:control_problem}
Our goal is to synthesize, from an example dataset, say  $\textcolor{black}{\boldsymbol{\Delta}}_e^n $, the control pdf/policy, \textcolor{black}{i.e. $\tilde f_{\mathbf{U}}^k$}, that: (i) makes the closed loop system {\em similar} to the behavior illustrated in the \textcolor{black}{dataset}; (ii)  satisfies the system actuation constraints even if these are not fulfilled by the examples. We specify the behavior illustrated in the examples through the reference pdf $\Ipdf \left( \textcolor{black}{\boldsymbol{\Delta}}_e^n \right) $ extracted from the example dataset. \textcolor{black}{By means of Markov's property} and the chain rule for pdfs we have $\Ipdf \left( \textcolor{black}{\boldsymbol{\Delta}}_e^n \right)  
	:=
	\prod_{k\in \mathcal{K}} 
	\Ipdf\left(	\mathbf{x}_k | \mathbf{u}_k, \mathbf{x}_{k-1} \right)
	\Ipdf\left( \mathbf{u}_k | \mathbf{x}_{k-1} \right) 
	\Ipdf\left(	\mathbf{x}_0\right)$. Again, by setting 
	$\tilde g_{\mathbf{X}}^k:=g	\left(	\mathbf{x}_k | \mathbf{u}_k, \mathbf{x}_{k-1} \right)$, 
	$\tilde g_{\mathbf{U}}^k:=g	\left( \mathbf{u}_k | \mathbf{x}_{k-1} \right)$, $\tilde g^k:=\tilde g_{\mathbf{X}}^k\tilde g_{\mathbf{U}}^k$, $g_0:=g 	\left(	\mathbf{x}_0\right)$
 and $g^n:=g\left( \textcolor{black}{\boldsymbol{\Delta}}_e^n \right)$ we get:
 \textcolor{black}{\begin{equation}
\label{eq:CL_red_ref_short}
g^n
=
\left(\prod_{k\in \mathcal{K}} \tilde g_{\mathbf{X}}^k\tilde g_{\mathbf{U}}^k\right)g_0 .
\end{equation}}
\textcolor{black}{The pdf $\tilde g_{\mathbf{X}}^k$ is not necessarily the same as the pdf $\tilde f_{\mathbf{X}}^k$. This allows to consider situations of practical interest where e.g. the system used to collect the example dataset is not necessarily the same as the system under control\footnote{\textcolor{black}{As noted in \citep{Peterka_V_Bayesian_Approach_to_sys_ident_1981} the probabilistic descriptions are the {\em most general description of a system from the viewpoint of an outer observer}. These pdfs can be computed from the available data as e.g. empirical distributions}}.}
\textcolor{black}{\begin{Rmk}\label{rem:dataset}
Within this paper, a dataset is a sequence of data.  In certain applications one might have access to a collection of datasets,  which can be leveraged to compute the above pdfs.  In e.g.  autonomous driving applications one might run multiple test drives and, in this case, the pdfs can be computed from the collection of the speed profiles (i.e. the collection of datasets) obtained from each test drive. This observation is exploited in Section \ref{sec:example}.
\end{Rmk}}

The control problem, formalized next, can then be recast as the problem of designing \textcolor{black}{$\tilde f_{\mathbf{U}}^k$} so that $f^n$ \textcolor{black}{(i.e. the probabilistic description of the closed loop system)} approximates $g^n$ \textcolor{black}{(i.e.  the joint pdf extracted from the example dataset)}. This is formally stated as follows: 
\begin{Prob}
\label{prob:Main_Constr_Ctrl}
{Let, $ \forall k\in\mathcal{K}$:
\begin{enumerate}
	\item[(i)]  $n_e^k$ and $n_l^k$ be  positive integers;
	\item[(ii)]  
	\textcolor{black}{$h_{\mathbf{u},j}^k : \mathcal{U}_k \subseteq \R^{\textcolor{black}{n}_u} \mapsto\R$}, $ j = 1,\dots, n_e^k + n_l^k$,	be measurable mappings from $\mathcal{U}_k \subseteq \R^{\textcolor{black}{n}_u}$ into $\mathbb{R}$;  
		\item[(iii)] $H_{\mathbf{u},j}^k\in \mathbb{R}$, $ j = 1,\dots, n_e^k + n_l^k$, be  constants;
	\item[(iv)]
{	$h_{\mathbf{u},0}^k (\mathbf{u}) :=  \mathds{1}_{\mathcal{U}_k } (\mathbf{u})$ and $H_{\mathbf{u},0}^k	:= 1$.}
\end{enumerate}	
}	
Find  {$\left \lbrace 
\left( \tilde{f}_\mathbf{U}^k \right)^\ast
\right\rbrace_{\mathcal{K}}:=\left \lbrace f^\ast \left( \mathbf{u}_k | \mathbf{x}_{k-1} \right)\right\rbrace_{\mathcal{K}}$} such that:
\begin{equation}
\label{eq:ProbMain}	
	\begin{array}{lcll} 
	{ \left \lbrace 
	\left( \tilde{f}_\mathbf{U}^k \right)^* 
	\right \rbrace_{\mathcal{K}} \in \arg } 
	& \underset{{\left \lbrace\tilde{f}_\mathbf{U}^k\right \rbrace_{\mathcal{K}}}}{\min} 
		& \mathcal{D}_{\KL}\left( f^n||\Ipdf^n \right)\\
	& { \textnormal{s.t.:} }  	
		& c_{\mathbf{u},j}^k \left[ \tilde{f}_\mathbf{U}^k  \right]= 0, 	
			& {\forall j \in\mathcal{E}_0^k,  k \in \mathcal{K} }, \\
	&	
		& c_{\mathbf{u},j}^k \left[ \tilde{f}_\mathbf{U}^k  \right]\leq 0,  
			& { \forall j \in\mathcal{I}^k,    k \in \mathcal{K} } \\	
	\end{array} 
\end{equation}	
where 
\begin{equation}
\label{eq:constr_c_j_k}
c_{\mathbf{u},j}^k \left[ \tilde{f}_\mathbf{U}^k  \right]:=
\E_{\tilde{f}_\mathbf{U}^k }
\left[
h^{k}_{\mathbf{u},j}
\left(\mathbf{U_k}\right)
\right] 
- H^k_{\mathbf{u},j},
\end{equation}		
and  where $ \mathcal{E}^k_0:= \mathcal{E}^k\cup \lbrace 0 \rbrace$
{(with $\mathcal{E}^k:= \lbrace j \rbrace^{n^k_e}_{1}$)}, $\mathcal{I}^k:= \lbrace j \rbrace^{n^k_e + n^k_l}_{n^k_e + 1}$ 
are the \textit{equality} and \textit{inequality} {constraints} index sets
at time $t_k$ respectively.
\end{Prob}
\textcolor{black}{Following Definition \ref{def:KLdiv},  the cost in (\ref{eq:ProbMain}) is well defined if $f^n$ is absolutely continuous with respect to {$g^n$}. That is, as for other control results based on the minimization of the KL-divergence, the cost is well defined if the former pdf is $0$ whenever the latter pdf is $0$}.
\textcolor{black}{The constraints in Problem \ref{prob:Main_Constr_Ctrl}  are specific to the system under control and we do not make any assumption on the fact that the behavior observed in the example dataset fulfills these constraints. We make the following remarks:
\begin{Rmk}
Since the pdf $ \tilde g_{\mathbf{X}}^k$ can be different from $\tilde f_{\mathbf{X}}^k$,  we have that  $\tilde g_{\mathbf{U}}^k$ is not, in general, the optimal solution to Problem \ref{prob:Main_Constr_Ctrl}. Further,  since $\tilde g_{\mathbf{U}}^k$ might not satisfy the constraints in Problem \ref{prob:Main_Constr_Ctrl},  this pdf might not  be feasible for the problem. The explicit expression for the optimal solution to Problem \ref{prob:Main_Constr_Ctrl} is instead given in Theorem \ref{theo:ctrlConstr} and, as we shall see, it depends on $\tilde g_{\mathbf{U}}^k$. 
\end{Rmk}
\begin{Rmk}
The control problem is cast as the problem of
designing $\tilde{f}^k_{\mathbf{U}}$ so that $\mathcal{D}_{\KL}\left(f^n||g^n\right)$ is minimized, subject to the constraints.  In this sense,  the policy is designed so that $f^n$ {\em approximates} $g^n$ while still fulfilling the constrains.  With Theorem \ref{theo:ctrlConstr} we give an explicit expression for the optimal solution $\left \lbrace 
	\left( \tilde{f}_\mathbf{U}^k \right)^\ast 
	\right \rbrace_{\mathcal{K}}$.  The control input,  say $\bf{u}_k$,  applied to the system at time-step $k$ is obtained by sampling from the pdf $\left( \tilde{f}_\mathbf{U}^k \right)^\ast$.
	\end{Rmk}}

As noted in \citep{Pegueroles_G+Russo_G_ECC19_confid}, in the special case when in Problem \ref{prob:Main_Constr_Ctrl} there are no constraints and the pdfs $f^n$, $g^n$ are normal distributions with zero mean, then the control policy solving the problem has the same update rules as the Linear Quadratic Regulator. The introduction of the constraints formalized in Problem \ref{prob:Main_Constr_Ctrl} \textcolor{black}{can be useful in situations of practical interest where the actuation capabilities of the system are different from the actuation capabilities of the system used to collect the example \textcolor{black}{data} and/or in situations where the example data are collected without necessarily knowing the specific constraints of the system under control.}Also, by embedding actuation constraints into the problem formulation and by solving the resulting problem, one can {\em export} the policy (by e.g. using it to generate example datasets) that has been synthesized on a given system to other systems having different actuation capabilities. 

\textcolor{black}{As we shall see, the solution to Problem \ref{prob:Main_Constr_Ctrl} depends on the  conditional pdf \textcolor{black}{$\tilde f_{\mathbf{X}}^k$}, i.e.  the cpdf describing the behavior of the system under control, which therefore needs to be properly estimated. Hence,  as for other data-driven control approaches that rely solely on the available data,  these need to be sufficiently {\em informative}.  We refer  to e.g. \citep{8960476,8933093,9062331,COLIN2020109000} for recent results on data informativity and to \citep{9406124,10.2307/2238570} for results on optimal experimental design and persistency of excitation.}

\begin{Rmk}\label{rem:constraints_exp}
In Problem \ref{prob:Main_Constr_Ctrl}, the constraints are formalized as expectations and can be equivalently written as $\int_{\mathcal{S}(\tilde{f}_\mathbf{U}^k )} \tilde{f}_\mathbf{U}^k \; h_{\mathbf{u},j}^k\left( \mathbf{u} \right)\; d\mathbf{u} =  H_{\mathbf{u},j}^k$. The equality and inequality constraints\textcolor{black}{, and their number,} can change over time. \textcolor{black}{In Problem \ref{prob:Main_Constr_Ctrl}, $n_e^k$ and $n_l^k$ denote, respectively, the number of equality and inequality constraints at time-step $k$ (see the definitions of the sets $\mathcal{E}^k$ and $\mathcal{I}^k$ in the problem statement).}{Finally, the first equality constraint is a normalization constraint on the solution of the problem.}
\end{Rmk}
\textcolor{black}{\begin{Rmk}\label{rem:constraints_new}
The constraints in \eqref{eq:ProbMain} can be used to guarantee properties on e.g.  the moments of the cdpfs $\left( \tilde{f}_\mathbf{U}^k \right)^* $, i.e. on the optimal solution of the problem (see also Remark \ref{rem:constraints} and Remark \ref{rem:bound_constr}).  Additionally,  we  note that the inequality constraints in (\ref{eq:ProbMain}) can be used to capture {\em bound constraints} of the form $\mathbb{P}(\mathbf{U_k}\in\bar{\mathcal{U}}_k)\ge 1-\varepsilon$, where $\bar{\mathcal{U}}_k \subset \mathcal{U}_k$ and $\varepsilon\ge 0$. Indeed, note that: $\mathbb{P}(\mathbf{U_k}\in\bar{\mathcal{U}}_k)  = \int_{\mathcal{S}(\tilde{f}_\mathbf{U}^k )} \mathds{1}_{\bar{\mathcal{U}}_k}(\mathbf{u}_k)\tilde{f}_\mathbf{U}^k d\mathbf{u}_k
 = \E_{\tilde{f}_\mathbf{U}^k}\left[\mathds{1}_{\bar{\mathcal{U}}_k}(\mathbf{U}_k)\right]$. Hence,  the constraint $\mathbb{P}(\mathbf{U_k}\in\bar{\mathcal{U}}_k)\ge 1-\varepsilon$ can be written as a constraint of the form of these in (\ref{eq:ProbMain}).  Typically, in  works on {\em safe learning} for stochastic systems, see e.g. \citep{8651519,9290355}, $\varepsilon$ is a small constant and hence these types of constraints model the fact that the probability that the control variable is outside some (e.g. desired) set $\bar{\mathcal{U}}_k$ is smaller than some {\em acceptable} $\varepsilon$. When $\varepsilon =0$, the constraint amounts at imposing that $\mathbb{P}(\mathbf{U_k}\in\bar{\mathcal{U}}_k)  = 1$, thus implying that the pdf $\tilde{f}_\mathbf{U}^k$ is zero outside the set $\bar{\mathcal{U}}_k$.
\end{Rmk}
\begin{Rmk} 
In relation to Remark \ref{rem:constraints_new}, we also note how the constraint $\mathbb{P}(\mathbf{U_k}\in\bar{\mathcal{U}}_k)=\E_{\tilde{f}_\mathbf{U}^k}\left[\mathds{1}_{\bar{\mathcal{U}}_k}(\mathbf{U}_k)\right]\ge 1-\varepsilon$ is linear, and hence convex, in the decision variable \textcolor{black}{(i.e.  $\tilde{f}_\mathbf{U}^k$)} even if the indicator function is not convex, see e.g. \citep{6760249}. This implies that, in Problem \ref{prob:Main_Constr_Ctrl}, these constraints can be handled without resorting to {bound} approximations. These approximations are  typically used in the literature to handle the intrinsic non-convexity of the constraint arising in problems where the decision variable is $\mathbf{U}_k$, see e.g. \citep{6760249,8287310}. Note indeed that,  while $\E_{\tilde{f}_\mathbf{U}^k}\left[\mathds{1}_{\bar{\mathcal{U}}_k}(\mathbf{U}_k)\right]$ is convex in $\tilde{f}_\mathbf{U}^k$, this is not convex in $\mathbf{U}_k$. 
\end{Rmk}}
\section{Technical Results}\label{sec:results}
We now introduce our main results. The key result behind the algorithm of  Section \ref{sec:algorithm} is Theorem \ref{theo:ctrlConstr}. The proof of this theorem, given in this section, makes use of two technical results (i.e. Lemma \ref{lem:Constrained_KL} and Lemma \ref{proper:KLdiv_split_nD_fn}). With our first result, i.e. Lemma \ref{lem:Constrained_KL}, we tackle an {\em auxiliary} problem that is iteratively solved within the proof of Theorem \ref{theo:ctrlConstr}. This auxiliary problem is formalized next.

\begin{Prob}\label{prob:Lemma_1}
Let: 
\begin{enumerate}
\item[(i)] $n_e$, $n_l$ be two positive integers;
\item[(ii)] $\mathbf{Z}$ be a random vector, with $\mathbf{z}\in\mathcal{Z}\subseteq \R^{n_z}$;
\item[(iii)] \textcolor{black}{$g (\mathbf{z})$} and \textcolor{black}{$ f(\mathbf{z})$}  be two pdfs having support $\mathcal{Z}$; 
\item[(iv)] $\alpha: \mathcal{Z} \mapsto \mathbb{R}$ be a mapping from $\mathcal{Z}$ into $\mathbb{R}$, which is integrable under the measure given by $ f(\mathbf{z})$;
{\item[(v)] \textcolor{black}{
$h_j:\mathcal{Z}\mapsto\R$}, $j=1,\ldots,n_e+n_l$, be  measurable  mappings from $\mathcal{Z}$ into $\mathbb{R}$; 
\item[(vi)] $H_j \in \mathbb{R}$, $j=1,\ldots,n_e+n_l$, be a constant;
\item[(vii)] 	
$h_0 (\mathbf{z}) :=  \mathds{1}_{\mathcal{Z} } (\mathbf{z})$ and $H_0:= 1$.
}
\end{enumerate}
Find the pdf \textcolor{black}{$f^{\ast}(\mathbf{z})$} such that:
\begin{equation}
\label{eq:probl_Lemma_1}
	\begin{array}{lcll} 
	{ \textcolor{black}{f^{\ast}(\mathbf{z})} \in \arg }
		& \underset{\textcolor{black}{f(\mathbf{z})}}{ \min} 	
			& \mathcal{L}\left(\textcolor{black}{f(\mathbf{z})}\right) \\	
		& {\textnormal{s.t.:}} 			
			&  c_j \left[\textcolor{black}{f(\mathbf{z})}\right]= 0,   		
				& {\forall  j \in \mathcal{E}_0}, \\
		& 	& c_j \left[\textcolor{black}{f(\mathbf{z})}\right]\leq 0,  	
				&{\forall  j \in \mathcal{I}}
	\end{array} 
\end{equation}
where:
\begin{equation}
\label{eq:Lagr_Lem1}
\mathcal{L}\left(\textcolor{black}{f(\mathbf{z})}\right) := \mathcal{D}_{\KL} \left(\textcolor{black}{f(\mathbf{z})} || \textcolor{black}{g(\mathbf{z})}\right)	+ 
\int f (\mathbf{z}) \; \alpha \left( \mathbf{z} \right) \, d\mathbf{z},
\end{equation}	
with
\begin{equation}
\label{eq:constr_c_j}
c_j \left[\textcolor{black}{f(\mathbf{z})}\right]:=
\int{f \left( \mathbf{z} \right) \;	
h_j \left( \mathbf{z} \right)\; d\mathbf{z}} 
	- H_j,
\end{equation}	
{
and
$ \mathcal{E}_0:= \mathcal{E}\cup \lbrace 0 \rbrace$, $\mathcal{E}:= \lbrace j \rbrace^{n_e}_{1}$, $\mathcal{I}:= \lbrace j \rbrace^{n_e + n_l}_{n_e + 1}$. 
}
\end{Prob} 
\textcolor{black}{In Lemma \ref{lem:Constrained_KL} we  give the optimal solution of Problem \ref{prob:Lemma_1}. Such a  result is  used in  Theorem \ref{theo:ctrlConstr} to find the solution to Problem \ref{prob:Main_Constr_Ctrl}.  Consider the special case where, in Problem \ref{prob:Main_Constr_Ctrl}: (i) the function $\alpha(\cdot)$ is constant in $\mathcal{Z}$; (ii) the support $\mathcal{Z}$ is compact and $g(\mathbf{z})$ is uniform. Then,  the cost in Problem \ref{prob:Lemma_1} becomes $\mathcal{L}(f(\mathbf{z})) = \int f(\mathbf{z})\log f(\mathbf{z})d\mathbf{z}$. Hence,  in this case, Problem \ref{prob:Lemma_1} becomes a constrained entropy maximization problem.   See also Remark \ref{rem:maxent_maxlikelihood_comaprison}. }
{We consider feasible sets of constraints that satisfy the following constraint qualification condition (CQC):
\begin{Def}
Consider the set of linear constraints 
$$
\left\{\begin{array}{*{20}l}
c_i\left[\textcolor{black}{f(\mathbf{z})}\right] =0, & i=1,\ldots,n_e\\
c_j\left[\textcolor{black}{f(\mathbf{z})}\right] \le 0, & j = 1,\ldots, n_l.
\end{array}\right.
$$
The Slater's CQC (or simply Slater's condition in what follows) is said to hold for such a set if there exists a pdf, say $\bar{f}(\mathbf{z})$, such that $c_i[\bar{f}(\mathbf{z})] =0$, $\forall i\in\{1,\ldots,n_e\}$ and $c_j[\bar{f}(\mathbf{z})] < 0$, $\forall j \in \{ 1,\ldots, n_l\}$.
\end{Def}
\begin{Rmk}\label{rem:duality}
Slater's condition is also known in the literature on infinite-dimensional optimization problems as a {\em regularity} condition on the constraint set \citep{Roc_76,Roc_88}. For a convex infinite-dimensional optimization problem, the fulfillment of such a condition guarantees strong duality \citep{Roc_88}.
\end{Rmk}
We make the following:
\begin{Asm}\label{asm:constrraints}
The constraints sets in (\ref{eq:ProbMain}) and in (\ref{eq:probl_Lemma_1}) are feasible and satisfy Slater's condition.
\end{Asm}
\begin{Rmk}
Assumption \ref{asm:constrraints} is widely used in the literature on e.g. infinite-dimensional optimization \citep{Roc_88,Fan_K_JMAnnApp_1968_infinite}, divergences optimization and cross-entropy problems \citep{10.1145/2591796.2591803,Bot_05}. If the problems involve discrete distributions, then checking feasibility and the Slater's condition implies  solving (finite dimensional) systems of equalities and inequalities, see e.g. \citep{Duffin_RJ_Dantzig_GB_Fan8K_Book_1956,Fan_K_JLinAlg&App_1975_two_Aap_of_consistency,
	Hiebert_K_SIAM_AppAn_1980_sol_eq&ineq,
	Censor_Y_&_ELfving_T_JLinAlg&App_1982_new_meth_Lin_Ineq}. \textcolor{black}{For problems with continuous variables, an approach to check the condition consists in building an {\em initial} pdf  for which equality constraints are satisfied and inequality constraints are satisfied strictly. This approach has been used in Section \ref{sec:example} to verify the fulfillment of Assumption \ref{asm:constrraints} for our numerical example.}
\end{Rmk}

We are now ready to introduce the next result, which gives a solution to Problem \ref{prob:Lemma_1}.

\begin{Lem}\label{lem:Constrained_KL}
{Consider Problem \ref{prob:Lemma_1}.} Then:
\begin{enumerate}
\item[{\bf (R1)}] {the problem has a unique solution and this} is given by the pdf
\begin{equation}
\label{eq:Lem1_opt_fu_sol}
\textcolor{black}{f^\ast(\mathbf{z})} = 	
\textcolor{black}{g(\mathbf{z})} \;
\frac{ 	
	e^{- 
		\left \lbrace
		\alpha \left( \mathbf{z} \right) +
		\sum_{j \in \textcolor{black}{\mathcal{I}_a} \left( \textcolor{black}{f^\ast(\mathbf{z})}  \right) \smallsetminus \lbrace 0 \rbrace }
			\lambda_j^*  h_j \left( \mathbf{z} \right)
		\right \rbrace 
	}
}{e^{1+\lambda^*_0}}.
\end{equation}	
 In \eqref{eq:Lem1_opt_fu_sol}, $\lambda^\ast_j$ is the \textcolor{black}{Lagrange multiplier} associated to the constraint $c_j\left[\textcolor{black}{f(\mathbf{z})} \right]$ and $ \textcolor{black}{\mathcal{I}_a}\left(\textcolor{black}{f^\ast(\mathbf{z})} \right)$ is the active index set defined as
\begin{equation}
\textcolor{black}{\mathcal{I}_a} \left(f\right):
	= \mathcal{E}_0 \cup \lbrace j \in \mathcal{I}: c_j\left[ f \right] =0 \rbrace .
\end{equation}
In (\ref{eq:Lem1_opt_fu_sol}) the \textcolor{black}{Lagrange multipliers} 
$$
\boldsymbol{\lambda}^\ast:= [\lambda_0^\ast,\lambda_1^\ast,\dots,\lambda_{n_e + n_{\textcolor{black}{l}}}^\ast]^T,
$$ 
can be computed by solving the optimization problem
{\begin{equation}
\label{eq:probl_dual_Lemma_1_statement} 
\begin{array}{lcll} 
\boldsymbol{\lambda}^\ast \in \arg
	& \underset{\boldsymbol{\lambda}}{ \max } 	
		& \mathcal{L}^D \left(\boldsymbol{\lambda}\right) \\	
	& \textnormal{s.t.:} 								
		& \lambda_j \textnormal{ free}, 	
			& \forall j \in \mathcal{E}_0,  \\
	& 		
		& \lambda_j \geq 0,  		
			& \forall j \in \mathcal{I} \\ 	
\end{array} 
\end{equation}
where 
\textcolor{black}{\begin{equation}
\label{eq:Lagr_dual_final}
	\begin{array}{llll}
	\mathcal{L}^D \left(\boldsymbol{\lambda}\right):= 
	-  \langle\boldsymbol{\lambda},\mathbf{H}\rangle 
	- \int 
	g   \left(\mathbf{z}\right)  \;
	e^{-\lbrace 1+ \alpha(\mathbf{z}) + \langle \boldsymbol{\lambda},\mathbf{h}\left( \mathbf{z} \right) \rangle \rbrace}
	d\mathbf{z} .
	\end{array} 	
\end{equation}}
}


\item[{\bf (R2)}] Moreover, the corresponding minimum  is $ \mathcal{L} \left( \textcolor{black}{f^\ast(\mathbf{z})}\right)  
= - \left( 
	1 + \sum_{j \in \textcolor{black}{\mathcal{I}_a} \left( \textcolor{black}{f^\ast(\mathbf{z})}  \right) } 
	\lambda^\ast_j \, H_j	
	\right)$.
\end{enumerate}
\end{Lem}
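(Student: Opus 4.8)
The plan is to treat Problem~\ref{prob:Lemma_1} as a convex infinite-dimensional program and solve it via Lagrangian duality. First I would record that the objective $\mathcal{L}$ in \eqref{eq:Lagr_Lem1} is the sum of $\mathcal{D}_{\KL}(f(\mathbf{z})\|g(\mathbf{z}))$, which is strictly convex in $f(\mathbf{z})$ because $t\mapsto t\ln t$ is strictly convex, and of the linear functional $\int f(\mathbf{z})\,\alpha(\mathbf{z})\,d\mathbf{z}$; hence $\mathcal{L}$ is strictly convex. Since every constraint \eqref{eq:constr_c_j} is affine in $f(\mathbf{z})$, the feasible set is convex, so the program has at most one minimizer. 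This already yields the uniqueness assertion in \textbf{(R1)} once a minimizer is exhibited.

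Next I would form the Lagrangian $\mathcal{L}(f,\boldsymbol{\lambda})=\mathcal{D}_{\KL}(f\|g)+\int f\,\alpha\,d\mathbf{z}+\sum_j\lambda_j\,c_j[f]$, with $\lambda_j$ free for $j\in\mathcal{E}_0$ and $\lambda_j\ge 0$ for $j\in\mathcal{I}$, and compute the dual function $\mathcal{L}^D(\boldsymbol{\lambda}):=\inf_{f}\mathcal{L}(f,\boldsymbol{\lambda})$ by pointwise minimization: for fixed $\boldsymbol{\lambda}$ and each $\mathbf{z}$, the scalar map $t\mapsto t\ln\!\big(t/g(\mathbf{z})\big)+t\big(\alpha(\mathbf{z})+\langle\boldsymbol{\lambda},\mathbf{h}(\mathbf{z})\rangle\big)$ is strictly convex on $t\ge 0$ with minimizer $t=g(\mathbf{z})\,e^{-\{1+\alpha(\mathbf{z})+\langle\boldsymbol{\lambda},\mathbf{h}(\mathbf{z})\rangle\}}$. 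Substituting this minimizer makes the $\mathcal{D}_{\KL}$ and linear terms collapse and produces exactly \eqref{eq:Lagr_dual_final}, while the minimizing $f$ takes the form \eqref{eq:Lem1_opt_fu_sol} once the $j=0$ term is pulled out of the exponent (using $h_0\equiv 1$ on the support and $H_0=1$, which isolates $e^{1+\lambda_0^\ast}$) and the inactive inequality multipliers are set to zero. Under Assumption~\ref{asm:constrraints} (Slater's condition), the standard strong-duality theorem for convex problems with a regular constraint set (see Remark~\ref{rem:duality} and \citep{Roc_76,Roc_88}) guarantees that the dual problem \eqref{eq:probl_dual_Lemma_1_statement} attains its maximum at some $\boldsymbol{\lambda}^\ast$ with zero duality gap; integrability of $\alpha$ under $f$ together with Slater feasibility keeps all the integrals finite, so the $f^\ast$ defined by \eqref{eq:Lem1_opt_fu_sol} with $\boldsymbol{\lambda}=\boldsymbol{\lambda}^\ast$ is a bona fide pdf.

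With $\boldsymbol{\lambda}^\ast$ in hand I would verify optimality and compute the value directly, avoiding a delicate appeal to infinite-dimensional KKT necessity. Since $\ln\!\big(f^\ast/g\big)=-\{1+\alpha+\langle\boldsymbol{\lambda}^\ast,\mathbf{h}\rangle\}$, the identity $\mathcal{D}_{\KL}(f\|g)=\mathcal{D}_{\KL}(f\|f^\ast)+\int f\,\ln\!\big(f^\ast/g\big)\,d\mathbf{z}$ gives, for every feasible $f$,
\[
\mathcal{L}(f)=\mathcal{D}_{\KL}(f\|f^\ast)-1-\sum_{j}\lambda_j^\ast\!\int f\,h_j\,d\mathbf{z}\ \ge\ \mathcal{D}_{\KL}(f\|f^\ast)-1-\sum_{j\in\mathcal{I}_a(f^\ast)}\lambda_j^\ast H_j,
\]
where the equality uses $\int f\,d\mathbf{z}=1$ and the inequality uses $\int f\,h_j\,d\mathbf{z}=H_j$ for $j\in\mathcal{E}_0$, $\lambda_j^\ast\ge 0$ together with $\int f\,h_j\,d\mathbf{z}\le H_j$ for $j\in\mathcal{I}$, and $\lambda_j^\ast=0$ for inactive $j$. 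The first-order/complementarity conditions characterizing $\boldsymbol{\lambda}^\ast$ as a maximizer of the concave $\mathcal{L}^D$ — namely $\partial\mathcal{L}^D/\partial\lambda_j=c_j[f^\ast]$, which vanishes on $\mathcal{E}_0$ and on active inequalities, is $\le 0$ elsewhere, with $\lambda_j^\ast\ge 0$ on $\mathcal{I}$ — show precisely that $f^\ast$ is feasible and that $\mathcal{D}_{\KL}(f^\ast\|f^\ast)=0$ makes the bound tight. Hence $\mathcal{L}(f)\ge\mathcal{L}(f^\ast)=-\big(1+\sum_{j\in\mathcal{I}_a(f^\ast)}\lambda_j^\ast H_j\big)$, which is \textbf{(R2)}, with equality only when $f=f^\ast$ almost everywhere, re-confirming \textbf{(R1)}.

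I expect the main obstacle to be the infinite-dimensional duality bookkeeping: inferring strong duality and attainment of the dual optimum from Slater's condition in the function-space setting, and checking well-posedness/integrability of $f^\ast$ and $\mathcal{L}^D$ so that the pointwise minimization and the interchange of $\inf_f$ and $\int$ in the computation of $\mathcal{L}^D$ are legitimate. The convexity argument, the explicit scalar minimization, and the final verification inequality are otherwise routine.
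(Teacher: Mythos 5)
Your proposal is correct and follows the same overall skeleton as the paper's proof: convexity of the objective plus affinity of the constraints, the augmented Lagrangian whose pointwise minimization over $f(\mathbf{z})$ yields the exponential-family candidate $g(\mathbf{z})e^{-\{1+\alpha(\mathbf{z})+\langle\boldsymbol{\lambda},\mathbf{h}(\mathbf{z})\rangle\}}$ and the dual function \eqref{eq:Lagr_dual_final}, Slater's condition for strong duality, and the zero duality gap to read off the optimal value in {\bf (R2)}. Where you genuinely depart from the paper is in how optimality of the candidate is certified. The paper establishes convexity via the second variation of the functional, invokes the KKT conditions as necessary and sufficient for a convex program (citing a finite-dimensional reference), and imposes stationarity of the augmented Lagrangian in the calculus-of-variations sense; you instead bypass infinite-dimensional KKT necessity entirely by using the decomposition $\mathcal{D}_{\KL}(f\|g)=\mathcal{D}_{\KL}(f\|f^\ast)+\int f\,\ln(f^\ast/g)\,d\mathbf{z}$ to prove directly that $\mathcal{L}(f)\ge \mathcal{L}(f^\ast)$ for every feasible $f$, with equality only at $f=f^\ast$, deriving primal feasibility and complementary slackness of $f^\ast$ from the first-order conditions of the \emph{finite-dimensional} dual maximization. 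This verification step is arguably more self-contained and robust in the function-space setting, since once a dual-feasible $\boldsymbol{\lambda}^\ast$ with primal-feasible $f^\ast$ and complementary slackness is in hand, your inequality certifies global optimality and uniqueness without any appeal to an infinite-dimensional KKT theorem; the price is that you must still justify dual attainment and the differentiability of $\mathcal{L}^D$ under the integral sign, which you correctly flag and which the paper handles at the same level of rigor via Assumption~\ref{asm:constrraints} and the cited duality results.
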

\begin{proof}
See the appendix.
\end{proof}

Before introducing the next technical result, we make the following remarks on Lemma \ref{lem:Constrained_KL}.

\begin{Rmk}\label{rem:constraints}
The equality constraints in \eqref{eq:constr_c_j} can be used to impose parametric prescriptions on the solution. For example, one could impose that $f^{\ast}(\mathbf{z})$ has the central moment of order $i$ equal to some $m_\mathbf{Z}^i$. 
This is equivalent to impose that the solution satisfies $\mathbb{E}_{f}\left[ \mathbf{Z}^i \right] = m_\mathbf{Z}^i$, which in turn can be formalized as {$c_i \left[f(\mathbf{z})\right]:=
	\int{f \left( \mathbf{z} \right) \;	
		\mathbf{z}^i \; d\mathbf{z}} 
	- 	m^i_\mathbf{Z}$.
} \textcolor{black}{These types of equality constraints, also arise within the literature on the approximation of spectral density functions with respect to the KL-divergence under moment constraints, where linear systems are typically considered. See e.g. \citep{1246014,1618839, 8359301}.}
\end{Rmk}

\begin{Rmk}\label{rem:bound_constr}
The inequality constraints in \eqref{eq:constr_c_j} can also be used to assign properties to the  solution: with these constraints, one could express bounds on the expected value of any function of $\mathbf{Z}$, say $h(\mathbf{Z})$. For instance, the rectangular bound $ \underline{m}^2_\mathbf{Z}\leq \mathbb{E}_{f}\left[ \mathbf{Z}^2 \right] \leq \overline{m}^2_\mathbf{Z} $ can be formalized with the pair of inequality constraints:
{		\begin{equation*}
		\label{eq:constr_rect_shape}
		\left\{\begin{array}{ll}
			c_a \left[{f(\mathbf{z})}\right]:=
			\int{f 
				\left( \mathbf{z} \right) \;	
				\mathbf{z}^2 
				\; d\mathbf{z}} 
			-  \;	\overline{m}^2_\mathbf{Z}, \\	
			c_b \left[{f(\mathbf{z})}\right]:=
			-\int{f 
				\left( \mathbf{z} \right) \;	
					\mathbf{z}^2 
				\; d\mathbf{z}} 
			 +  \; \underline{m}^2_\mathbf{Z} .
		\end{array}\right.
	\end{equation*}
}
\end{Rmk}
{\begin{Rmk}
\label{rem:lambda_0}
The  \textcolor{black}{Lagrange multiplier} {$\lambda_0$} in \eqref{eq:Lem1_opt_fu_sol} can be expressed as a function of all the other  \textcolor{black}{Lagrange multipliers}. This can be done by imposing the normalization constraint, i.e. $c_0\left[\textcolor{black}{f^\ast(\mathbf{z})}\right]=0$, on the pdf solving the problem, i.e. $\eqref{eq:Lem1_opt_fu_sol}$. This yields:
	\textcolor{black}{\begin{equation*}
	\begin{split}
	1 +  \lambda_0 =
	 \ln 
	\left( 	
	\int
	g \left(\mathbf{z}\right) 
	\;
	e^{- 
		\left \lbrace
		\alpha \left( \mathbf{z} \right) +
		\sum_{j \in  {\mathcal{I}_a} \left( {f^\ast(\mathbf{z})} \right) \smallsetminus \lbrace 0 \rbrace }
		{\lambda_j} h_j \left( \mathbf{z} \right)
		\right \rbrace 
	}
	d\mathbf{z}
	\right),
	\end{split}
	\end{equation*}	}
 	which can also be obtained by imposing the {stationarity} condition of the Lagrange dual function with respect to $\lambda_0$.
	Also, the  expression for $\lambda_0$ above could be directly embedded in the dual cost function (\ref{eq:Lagr_dual_final}), yielding \textcolor{black}{$\mathcal{L}^{D} \left( \boldsymbol{\lambda} \right)
			 = - \sum_{j \in \mathcal{E} \cup  \mathcal{I} }
			\lambda_j H_j	\,  -  \ln 
			\left( 	
			\int
			g \left(\mathbf{z}\right) 
			\;
			e^{- 
				\left \lbrace
				\alpha \left( \mathbf{z} \right) +
				\sum_{ j \in \mathcal{E} \cup  \mathcal{I} }
				\lambda_j h_j \left( \mathbf{z} \right)
				\right \rbrace 
			}
			d\mathbf{z}
			\right)$},
	and thus reducing by one the dimension of the search space of the dual problem.
\end{Rmk} 
}

We now introduce the following technical result that is also used in the proof of Theorem \ref{theo:ctrlConstr}.
\begin{Lem}\label{proper:KLdiv_split_nD_fn}	
Let $f^n$ and $g^n$ be the \textcolor{black}{joint} pdfs defined in 
\textcolor{black}{\eqref{eq:CL_red_v1_short}} and \eqref{eq:CL_red_ref_short}, respectively. Then:
\textcolor{black}{\begin{equation*}
\begin{split}
\mathcal{D}_{\KL}
\left( f^n || \Ipdf^n \right)  		
& = \mathcal{D}_{\KL}
\left( f^{n-1} ||\Ipdf^{n-1} \right)  \\
& +
\mathbb{E}_{f^{n-1}}
\left[	
\mathcal{D}_{\KL} 
\left(
\tilde{f}^{n}|| \tilde{\Ipdf}^{n}
\right)
\right]	,
\end{split}
\end{equation*}}
\textcolor{black}{where $\tilde{f}^{n}$ and $\tilde{\Ipdf}^{n}$ are the conditional pdfs defined as in Section \ref{ss:Assumpt}, i.e. $\tilde{f}^{n}:= \tilde f_{\mathbf{X}}^n \tilde f_{\mathbf{U}}^n$ and $\tilde{g}^{n}:= \tilde g_{\mathbf{X}}^n \tilde g_{\mathbf{U}}^n$.}	
\end{Lem}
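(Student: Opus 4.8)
The plan is to prove the claimed decomposition as a direct application of the chain rule for the KL-divergence (Property~\ref{proper:KLsplit}) to the factorizations \eqref{eq:CL_red_v1_short} and \eqref{eq:CL_red_ref_short}. The key observation is that, by Markov's property, both $f^n$ and $g^n$ factor as $f^n = \tilde f^n f^{n-1}$ and $g^n = \tilde g^n g^{n-1}$, where $\tilde f^n = \tilde f_{\mathbf{X}}^n \tilde f_{\mathbf{U}}^n = f(\mathbf{x}_n,\mathbf{u}_n \mid \mathbf{x}_{n-1})$ is the conditional pdf of the last datum $\boldsymbol{\Delta}_n$ given the history $\boldsymbol{\Delta}^{n-1}$, and similarly for $\tilde g^n$. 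Hence $f^n$ is the joint pdf of the pair $(\boldsymbol{\Delta}^{n-1},\boldsymbol{\Delta}_n)$, with marginal $f^{n-1}$ over $\boldsymbol{\Delta}^{n-1}$ and conditional $\tilde f^n$ over $\boldsymbol{\Delta}_n$ given $\boldsymbol{\Delta}^{n-1}$.

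First I would make explicit that $\boldsymbol{\Delta}_n$ plays the role of $\mathbf{z}$ and $\boldsymbol{\Delta}^{n-1}$ plays the role of $\mathbf{y}$ in the statement of Property~\ref{proper:KLsplit}, so that $\phi(\mathbf{y},\mathbf{z}) = f^n$, $g(\mathbf{y},\mathbf{z}) = g^n$, $\phi(\mathbf{y}) = f^{n-1}$, $g(\mathbf{y}) = g^{n-1}$, $\phi(\mathbf{z}\mid\mathbf{Y}) = \tilde f^n$ and $g(\mathbf{z}\mid\mathbf{Y}) = \tilde g^n$. Then I would simply invoke Property~\ref{proper:KLsplit} to write
\begin{equation*}
\mathcal{D}_{\KL}\left(f^n || g^n\right) = \mathcal{D}_{\KL}\left(f^{n-1} || g^{n-1}\right) + \mathbb{E}_{f^{n-1}}\left[\mathcal{D}_{\KL}\left(\tilde f^n || \tilde g^n\right)\right],
\end{equation*}
which is precisely the claim, after recalling that by definition $\tilde g^n = \tilde g_{\mathbf{X}}^n \tilde g_{\mathbf{U}}^n$ and $\tilde f^n = \tilde f_{\mathbf{X}}^n \tilde f_{\mathbf{U}}^n$. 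For completeness one can also note that the absolute-continuity hypothesis needed for the divergences to be well defined is inherited from the standing assumption that $f^n$ is absolutely continuous with respect to $g^n$ (which, through the factorizations, propagates to $f^{n-1}\ll g^{n-1}$ and $\tilde f^n \ll \tilde g^n$ $f^{n-1}$-almost everywhere).

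There is essentially no hard step here: the result is a specialization of the already-established chain rule, and the only thing to be careful about is the bookkeeping of which conditional/marginal pdf corresponds to which slot in Property~\ref{proper:KLsplit}, together with checking that the Markov factorization indeed yields $f^n = \tilde f^n f^{n-1}$ (so that $\tilde f^n$ is genuinely the conditional of $\boldsymbol{\Delta}_n$ given the whole history $\boldsymbol{\Delta}^{n-1}$, not merely given $\mathbf{x}_{n-1}$ --- these coincide precisely because of Markov's property, so the conditioning on the extra past variables is vacuous). If anything deserves a line of justification it is this last point; everything else is immediate. I would therefore keep the proof short, stating the identification with Property~\ref{proper:KLsplit} and concluding.
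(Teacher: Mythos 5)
Your proposal is correct and follows exactly the paper's own route: the authors likewise obtain the result by applying Property~\ref{proper:KLsplit} with $\mathbf{Y}:=\boldsymbol{\Delta}^{n-1}$ and $\mathbf{Z}:=\boldsymbol{\Delta}_n$, so that $f^{n-1}$, $\tilde f^n$ play the roles of the marginal and conditional pdfs. Your extra remark that the Markov property is what makes $\tilde f^n$ the conditional of $\boldsymbol{\Delta}_n$ given the full history (not just $\mathbf{x}_{n-1}$) is a useful clarification the paper leaves implicit, but the argument is the same.
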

\begin{proof} 
The result is obtained from Property~\ref{proper:KLsplit} (see the appendix for a proof of this property) by setting
$\mathbf{Y} := [\mathbf{X}_0,\mathbf{U}_1,\mathbf{X}_1,\dots, \mathbf{U}_{n-1}, \mathbf{X}_{n-1} ]$ 
and
$\mathbf{Z}:=[\mathbf{U}_{n}, \mathbf{X}_{n} ]$.
\end{proof}

The main result behind the algorithm of Section \ref{sec:algorithm}, the proof of which makes use of the above technical results, is presented next.
\begin{Thm}
\label{theo:ctrlConstr}
{Consider Problem \ref{prob:Main_Constr_Ctrl}.} Then:
	\item[{\bf (R1)}]
	The control policy at time instant $t_k$,  $\left( \tilde{f}_\mathbf{U}^k \right)^\ast$, 
	composing 
	$\left \lbrace 
	\left( \tilde{f}_\mathbf{U}^k \right)^* 
	\right\rbrace_{\mathcal{K}}$ 
	solving the problem is given by
\begin{equation}
\label{eq:opt_ctrl}
\left( \tilde{f}_\mathbf{U}^k \right)^\ast = 
\tilde{\Ipdf}_\mathbf{U}^k 
\frac{e^{-\lbrace 
	\hat{\omega} \left( \mathbf{u}_k,\,\mathbf{x}_{k-1} \right) +
	\sum_{j \in \textcolor{black}{\mathcal{I}_a}^k  \smallsetminus \lbrace 0 \rbrace }
	\left( \lambda_{\mathbf{u},j}^k \right)^\ast
	h_{\mathbf{u},j}^k \left( \mathbf{u}_k \right)	
	\rbrace}}{e^{1+ \left( \lambda_{\mathbf{u},0}^k \right)^* } },
\end{equation}
where:
\begin{itemize}
\item $\hat{\omega}(\cdot,\cdot)$ is generated via the backward recursion
	\begin{equation}
		\label{eq:omega_defdef}
			\hat{\omega} \left( \mathbf{u}_k,\,\mathbf{x}_{k-1} \right)  = 	
			\hat{\alpha} \left( \mathbf{u}_k,\,\mathbf{x}_{k-1} \right) +
			\hat{\beta} \left( \mathbf{u}_k,\,\mathbf{x}_{k-1} \right), 
		\end{equation}
with
\begin{equation}
\label{eq:def:alpha_beta}
\begin{split}
\hat{\alpha} \left( \mathbf{u}_k,\,\mathbf{x}_{k-1} \right)
	& :=
	\mathcal{D}_{\KL}
			\left(
			\tilde{f}^k_{\mathbf{X}}
			|| 
			\tilde{\Ipdf}^k_{\mathbf{X}}
			\right),\\
	\hat{\beta} \left( \mathbf{u}_k,\,\mathbf{x}_{k-1} \right)
	& := 
			- \mathbb{E}_{\tilde{f}^k_\mathbf{X}}
			\left[	
			\ln 
			\hat{\gamma}
			\left(
			\mathbf{X}_{\textcolor{black}{k}}
			\right)
			\right],
		\end{split}
	\end{equation}
and terminal conditions $\hat{\beta} \left( \mathbf{u}_n,\,\mathbf{x}_{n-1} \right) = 0$, $\hat{\alpha} \left( \mathbf{u}_n,\,\mathbf{x}_{n-1} \right) = \mathcal{D}_{\KL}\left(\tilde{f}^n_{\mathbf{X}}||\tilde{\Ipdf}^n_{\mathbf{X}}
\right)$;
\item $\hat{\gamma}\left( \cdot \right)$ in \eqref{eq:def:alpha_beta} is defined as
\begin{equation}
	\label{eq:def_gamma}
	\ln 
	\hat{\gamma}
	\left(
	\mathbf{x}_{k-1}
	\right)
	:=	
	\sum_{j \in \textcolor{black}{\mathcal{I}_a}^k }
	\ln
	\left(
	\hat{\gamma}_{\mathbf{u},j}^k				
	\left(
	\mathbf{x}_{k-1}
	\right)
	\right), 
\end{equation}	
with
\begin{equation}
\label{eq:def_gamma_0}
\hat{\gamma}_{\mathbf{u},0}^k 
\left( \mathbf{x}_{k-1} \right)
= \exp{ \lbrace	
		\left( \lambda_{\mathbf{u},0}^k\right)^* + 1 
		\rbrace 
		}, \ \ \hat{\gamma}_{\boldsymbol{u},0}^{n+1}\left(\mathbf{x}_{n} \right) = 1,
\end{equation}
and
\begin{equation}
\label{eq:def_gamma_i}
\begin{array}{l}
\hat{\gamma}_{\mathbf{u},j}^k				
\left(
\mathbf{x}_{k-1}
\right) :=  
\exp{ \lbrace \left(\lambda_{\mathbf{u},j}^k\right)^\ast H_{\mathbf{u},j}^k	\rbrace}, \ \hat{\gamma}_{\mathbf{u},{j}}^{n+1} \left(\mathbf{x}_{n} \right) = 1,
\end{array}
\end{equation}
$\forall j \in \mathcal{E}^k \cup  \mathcal{I}^k$;	
\item 
{$\left(\lambda_{\mathbf{u},j}^k\right)^\ast$ in (\ref{eq:opt_ctrl}) 
is the Lagrange multiplier associated to the constraint 
$c_{\mathbf{u},j}^k$, while $\textcolor{black}{\mathcal{I}_a}^k$ is the active index set associated to $\left( \tilde{f}_\mathbf{U}^k \right)^*$. 
In particular, the vector of  \textcolor{black}{Lagrange multipliers} 
$\left( \boldsymbol{\lambda}_{\mathbf{u}}^k \right)^*:=
\left[\left(\lambda_{\mathbf{u},0}^k\right)^\ast, 
\left(\lambda_{\mathbf{u},1}^k\right)^\ast, 
\ldots, \left(\lambda_{\mathbf{u},n_e^k+n_l^k}^k\right)^\ast \right]^T$ can be computed solving:
\begin{equation}
\label{eq:probl_dual_k_Thm_1}
\begin{array}{llll} 
\left( \boldsymbol{\lambda}_{\mathbf{u}}^k\right)^* \in \arg 
&   \underset{ \boldsymbol{\lambda}_ \mathbf{u}^k }{ \max } & \mathcal{L}^{D} \left( \boldsymbol{\lambda}_{\mathbf{u}}^k  \right) \\	
& \textnormal{s.t.:} 						& \lambda_{\mathbf{u},j}^k \textnormal{ free}, 	& \forall j \in \mathcal{E}^k  \\
& 											& \lambda_{\mathbf{u},j}^k \geq 0,  		& \forall j \in \mathcal{I}^k, \\ 	
\end{array} 
\end{equation}
where 
\begin{equation*}
\begin{array}{lll}
\mathcal{L}^{D} \left( \boldsymbol{\lambda}_{\mathbf{u}}^k  \right) &=
- \sum_{j \in \mathcal{E}^k \cup  \mathcal{I}^k }
\lambda_{\mathbf{u},j}^k  H_{\mathbf{u},j}^k \,- \\
&
\eqlmarg \ln 
\left( 	
\int
\tilde{\Ipdf}_\mathbf{U}^k
\;
e^{
	-\hat{\omega} \left( \mathbf{u}_k,\,\mathbf{x}_{k-1} \right) 
}
\right.\\
&
\eqlmarg \quad
\left.
e^{- 
	\left \lbrace
	\sum_{ j \in \mathcal{E}^k \cup  \mathcal{I}^k }
	\lambda_{\mathbf{u},j}^k \, h_{\mathbf{u},j}^k \left( \mathbf{u}_k \right)
	\right \rbrace 
}
d\mathbf{u}_k
\right),
\end{array}
\end{equation*}
}
with $\left(\lambda_{\mathbf{u},0}^k\right)^\ast$ given by
\textcolor{black}{\begin{equation}
\label{eqn:lambda0_thm}
\begin{split}
\left(\lambda_{\mathbf{u},0}^k\right)^\ast 	& = 
\ln
\left(
\int \tilde{\Ipdf}_\mathbf{U}^k 
e^{-\hat{\omega} \left( \mathbf{u}_k,\,\mathbf{x}_{k-1} \right) }
\right.\\
& \left.
e^{-\left\lbrace 	
	\sum_{j \in {\mathcal{I}_a}^k \smallsetminus \lbrace 0 \rbrace }
	\left( \lambda_{\mathbf{u},j}^k \right)^\ast
	\, h_{\mathbf{u},j}^k \left( \mathbf{u}_k \right)
	\right\rbrace}
	\, 
	d\mathbf{u}_k 
\right) -1.
\end{split}
\end{equation}}
\end{itemize}	
\item[{\bf (R2)}] Moreover, the corresponding minimum at time  $t_k$ is given by:
\begin{equation}
\label{eq:ctrl_contrst_minVal}
B^*_k 
:= 	
- \mathbb{E}_{p^{k-1}_\mathbf{X}}
\left[
\ln \hat{\gamma} \left( \mathbf{X}_{k-1} \right) 	
\right]. 	
\end{equation}
where $p_{\mathbf{X}}^{k}$ denotes the pdf of the state at time $t_k$ 
(i.e. $p_{\mathbf{X}}^{k}:= f\left(\mathbf{x}_{k}\right)$). 
\end{Thm}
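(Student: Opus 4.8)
The plan is to solve Problem~\ref{prob:Main_Constr_Ctrl} by a backward, dynamic-programming-style recursion over the horizon $\mathcal{K}$: at each time-step the minimization over the single policy $\tilde f_{\mathbf{U}}^k$ is recast as an instance of Problem~\ref{prob:Lemma_1} and solved by invoking Lemma~\ref{lem:Constrained_KL}. Two structural facts make this possible. First, by Lemma~\ref{proper:KLdiv_split_nD_fn} and, recursively, by the chain rule of Property~\ref{proper:KLsplit}, the cost $\mathcal{D}_{\KL}(f^n\|\Ipdf^n)$ splits additively over time, so that $\tilde f_{\mathbf{U}}^k$ only enters the terms attached to time-steps $\ge k$. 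Second, the constraints $c_{\mathbf{u},j}^k$ at time $t_k$ involve only $\tilde f_{\mathbf{U}}^k=f(\mathbf{u}_k|\mathbf{x}_{k-1})$, and since $c_{\mathbf{u},j}^k[\tilde f_{\mathbf{U}}^k]$ is a function of $\mathbf{x}_{k-1}$, the conditions $c_{\mathbf{u},j}^k[\tilde f_{\mathbf{U}}^k]=0$ (resp.\ $\le 0$) hold if and only if they hold for a.e.\ $\mathbf{x}_{k-1}$; hence minimizing the relevant expectation over $\tilde f_{\mathbf{U}}^k$ subject to these constraints is achieved by minimizing pointwise in $\mathbf{x}_{k-1}$.

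For the base step ($k=n$), I would first use Lemma~\ref{proper:KLdiv_split_nD_fn} to write $\mathcal{D}_{\KL}(f^n\|\Ipdf^n)=\mathcal{D}_{\KL}(f^{n-1}\|\Ipdf^{n-1})+\E_{f^{n-1}}[\mathcal{D}_{\KL}(\tilde f^n\|\tilde{\Ipdf}^n)]$ and then apply Property~\ref{proper:KLsplit} to $\tilde f^n=\tilde f_{\mathbf{X}}^n\tilde f_{\mathbf{U}}^n$, with $\mathbf{Y}\leftrightarrow\mathbf{u}_n$ and $\mathbf{Z}\leftrightarrow\mathbf{x}_n$ (everything conditioned on $\mathbf{x}_{n-1}$), to obtain $\mathcal{D}_{\KL}(\tilde f^n\|\tilde{\Ipdf}^n)=\mathcal{D}_{\KL}(\tilde f_{\mathbf{U}}^n\|\tilde{\Ipdf}_{\mathbf{U}}^n)+\E_{\tilde f_{\mathbf{U}}^n}[\hat\alpha(\mathbf{U}_n,\mathbf{x}_{n-1})]$, with $\hat\alpha$ as in \eqref{eq:def:alpha_beta}. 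Because $\tilde f_{\mathbf{U}}^n$ appears nowhere else, for each fixed $\mathbf{x}_{n-1}$ the inner minimization is exactly Problem~\ref{prob:Lemma_1} with $\mathbf{Z}=\mathbf{U}_n$, $g(\mathbf{z})=\tilde{\Ipdf}_{\mathbf{U}}^n$, $f(\mathbf{z})=\tilde f_{\mathbf{U}}^n$, $\alpha(\mathbf{z})=\hat\alpha(\mathbf{z},\mathbf{x}_{n-1})=\hat\omega(\mathbf{z},\mathbf{x}_{n-1})$ (the terminal condition $\hat\beta(\mathbf{u}_n,\mathbf{x}_{n-1})=0$ giving $\hat\omega=\hat\alpha$ via \eqref{eq:omega_defdef}), and constraints $c_{\mathbf{u},j}^n$. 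By Assumption~\ref{asm:constrraints}, Lemma~\ref{lem:Constrained_KL} applies and returns the unique minimizer \eqref{eq:Lem1_opt_fu_sol}, which is \eqref{eq:opt_ctrl} for $k=n$; the associated dual problem \eqref{eq:probl_dual_Lemma_1_statement}, i.e.\ \eqref{eq:probl_dual_k_Thm_1}; the reduced expression for $\lambda_0$ of Remark~\ref{rem:lambda_0}, i.e.\ \eqref{eqn:lambda0_thm}; and, by (R2) of Lemma~\ref{lem:Constrained_KL}, the optimal value $-\bigl(1+\sum_{j\in\mathcal{I}_a^n}\lambda_{\mathbf{u},j}^n H_{\mathbf{u},j}^n\bigr)=-\ln\hat\gamma(\mathbf{x}_{n-1})$, the last equality using the definitions \eqref{eq:def_gamma}, \eqref{eq:def_gamma_0}, \eqref{eq:def_gamma_i} and $H_{\mathbf{u},0}^n=1$.

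For the inductive step, assume that after minimizing over $\tilde f_{\mathbf{U}}^{k+1},\dots,\tilde f_{\mathbf{U}}^n$ the residual cost is $\mathcal{D}_{\KL}(f^k\|\Ipdf^k)+\E_{f^k}[-\ln\hat\gamma(\mathbf{X}_k)]$, which holds for $k=n-1$ by the base step. Since $\ln\hat\gamma(\mathbf{X}_k)$ depends on $\mathbf{x}_k$ only, and $\mathbf{x}_k$ is drawn from $\tilde f_{\mathbf{X}}^k$, factoring $f^k=\tilde f_{\mathbf{X}}^k\tilde f_{\mathbf{U}}^k f^{k-1}$ and integrating out $\mathbf{x}_k$ gives $\E_{f^k}[-\ln\hat\gamma(\mathbf{X}_k)]=\E_{f^{k-1}}\bigl[\E_{\tilde f_{\mathbf{U}}^k}[\hat\beta(\mathbf{U}_k,\mathbf{X}_{k-1})]\bigr]$ with $\hat\beta$ as in \eqref{eq:def:alpha_beta}; splitting $\mathcal{D}_{\KL}(f^k\|\Ipdf^k)$ by Property~\ref{proper:KLsplit} as in the base step adds $\mathcal{D}_{\KL}(f^{k-1}\|\Ipdf^{k-1})+\E_{f^{k-1}}\bigl[\mathcal{D}_{\KL}(\tilde f_{\mathbf{U}}^k\|\tilde{\Ipdf}_{\mathbf{U}}^k)+\E_{\tilde f_{\mathbf{U}}^k}[\hat\alpha(\mathbf{U}_k,\mathbf{X}_{k-1})]\bigr]$. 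Summing, the $\tilde f_{\mathbf{U}}^k$-dependent part is $\mathcal{D}_{\KL}(\tilde f_{\mathbf{U}}^k\|\tilde{\Ipdf}_{\mathbf{U}}^k)+\E_{\tilde f_{\mathbf{U}}^k}[\hat\alpha+\hat\beta]=\mathcal{D}_{\KL}(\tilde f_{\mathbf{U}}^k\|\tilde{\Ipdf}_{\mathbf{U}}^k)+\E_{\tilde f_{\mathbf{U}}^k}[\hat\omega]$ by \eqref{eq:omega_defdef}, i.e.\ again the objective $\mathcal{L}$ of Problem~\ref{prob:Lemma_1} with $\alpha=\hat\omega(\cdot,\mathbf{x}_{k-1})$; applying Lemma~\ref{lem:Constrained_KL} pointwise in $\mathbf{x}_{k-1}$ gives \eqref{eq:opt_ctrl}, \eqref{eq:probl_dual_k_Thm_1}, \eqref{eqn:lambda0_thm}, and the new residual $\mathcal{D}_{\KL}(f^{k-1}\|\Ipdf^{k-1})+\E_{f^{k-1}}[-\ln\hat\gamma(\mathbf{X}_{k-1})]$, which closes the induction. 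Running the recursion down to $k=1$ yields $\{(\tilde f_{\mathbf{U}}^k)^*\}_{\mathcal{K}}$ as in (R1), uniqueness being inherited from the uniqueness clause of Lemma~\ref{lem:Constrained_KL} at each step. For (R2), the minimum of the $k$-th subproblem equals $-\ln\hat\gamma(\mathbf{x}_{k-1})$ by Lemma~\ref{lem:Constrained_KL}(R2); averaging over the state law $p_{\mathbf{X}}^{k-1}=f(\mathbf{x}_{k-1})$ (note $\E_{f^{k-1}}[\varphi(\mathbf{X}_{k-1})]=\E_{p_{\mathbf{X}}^{k-1}}[\varphi(\mathbf{X}_{k-1})]$) gives $B_k^*$ as in \eqref{eq:ctrl_contrst_minVal}.

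The routine work is the set of KL chain-rule manipulations and the substitutions into Lemma~\ref{lem:Constrained_KL}. The main obstacles are: (i) justifying the nested (greedy, backward) minimization, i.e.\ that the infimum over the whole policy sequence coincides with the iterated infima — this rests on the additive split of the cost, on the fact that the step-$k$ constraints touch only $\tilde f_{\mathbf{U}}^k$, and on the pointwise-in-$\mathbf{x}_{k-1}$ reduction noted above; and (ii) the bookkeeping that identifies the folded-in optimal value $-\ln\hat\gamma$ of step $k+1$ with the term $\E_{\tilde f_{\mathbf{U}}^k}[\hat\beta]$, so that the step-$k$ objective is exactly $\mathcal{L}$ of Problem~\ref{prob:Lemma_1} with $\alpha=\hat\omega$. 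Carrying out (ii) rigorously requires checking that $\hat\omega$ is integrable under $\tilde f_{\mathbf{U}}^k$ — so that hypothesis (iv) of Problem~\ref{prob:Lemma_1} holds and Lemma~\ref{lem:Constrained_KL} is applicable (here $\hat\alpha$ is a conditional KL divergence, which must be finite a.e.) — and that Assumption~\ref{asm:constrraints}, i.e.\ feasibility and Slater's condition, is inherited by every subproblem, which it is because each subproblem's constraint set is one of the per-time-step constraint sets of \eqref{eq:ProbMain}.
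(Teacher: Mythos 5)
Your proposal is correct and follows essentially the same route as the paper's proof: a backward recursion that uses Lemma \ref{proper:KLdiv_split_nD_fn} (and the chain rule of Property \ref{proper:KLsplit}) to peel off the last time-step, recognizes the resulting single-step subproblem as an instance of Problem \ref{prob:Lemma_1} with $\alpha=\hat{\omega}(\cdot,\mathbf{x}_{k-1})$, invokes Lemma \ref{lem:Constrained_KL} pointwise in $\mathbf{x}_{k-1}$, and folds the optimal value $-\ln\hat{\gamma}$ back into the cost to close the induction. Your explicit statement of the inductive hypothesis and of the side conditions (integrability of $\hat{\omega}$, inheritance of Assumption \ref{asm:constrraints} by each subproblem, and the pointwise-in-$\mathbf{x}_{k-1}$ reduction) matches, and in places makes more explicit, the paper's Steps 1--4.
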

Before giving the proof we make the following \textcolor{black}{remark}:
{\begin{Rmk}
The policy solving Problem \ref{prob:Main_Constr_Ctrl}, i.e. $\left(\tilde f_{\mathbf{U}}^k \right)^\ast$, directly depends on $\tilde g_{\mathbf{U}}^k$. This is the policy extracted from the examples and a natural choice is to estimate it from the \textcolor{black}{available data} (see also the example in Section \ref{sec:example}).  In principle,  one could use a $\tilde g_{\mathbf{U}}^k$ that, while not being extracted from the examples, embeds design preferences that might be known a-priori. \textcolor{black}{If full knowledge of the system (and of its constraints) is available, one can build a {\em synthetic} joint pdf $g^n$ that embeds the desired design properties. While our results can be used in this {\em ideal} situation, we remark here that building such synthetic pdf is not always possible in situations of practical interest where the example \textcolor{black}{data} are collected from a system that is not the same and/or does not have the same constraints as the system under control (or  the constraints of the system under control are not fully known).}  Finally, another choice, for pdfs having compact supports, is to set $\tilde g_{\mathbf{U}}^k$ equal to the uniform distribution.  
\end{Rmk}}
\begin{proof}
we prove the result by induction and the proof is organized in steps. First, in {\bf Step 1}, we leverage Lemma \ref{proper:KLdiv_split_nD_fn} to show that Problem \ref{prob:Main_Constr_Ctrl} can be split into sub-problems, where the optimization sub-problem for the last iteration, i.e. $k = {n}$, can be solved independently on the others. We then ({\bf Step 2}) make use of Lemma \ref{lem:Constrained_KL} to find an explicit solution for the sub-problem at $k={n}$. Once this is done, we update the cost  of Problem \ref{prob:Main_Constr_Ctrl} with the minimum found by solving the sub-problem at $k=n$ and show, in {\bf Step 3}, that the original problem can be again broken down into sub-problems. This time, the sub-problem at iteration $k={n-1}$ can be solved independently on the others. We then solve this sub-problem and note, in {\bf Step 4}, how \textcolor{black}{for all the remaining time-steps} the structure of the optimization remains the same. From this, the desired conclusions are drawn.

Before proceeding with the proof note that, for notational convenience, we use the shorthand notation $\left \lbrace
\constrVec{\mathbf{u}}{k}
\right \rbrace$ to denote the set of constraints of Problem \ref{prob:Main_Constr_Ctrl} at iteration $k$. We also denote by $\left \lbrace
\constrVec{\mathbf{u}}{k}\right\rbrace_{\mathcal{K}}$ the set of constraints over \textcolor{black}{$\mathcal{K}$ and by $\left \lbrace
\constrVec{\mathbf{u}}{k}
\right \rbrace_{k = 1}^{n-1}$ the constraints from  $t_1$ up to $t_{n-1}$. }

{\bf Step 1.} Note that, following Lemma \ref{proper:KLdiv_split_nD_fn}, Problem \ref{prob:Main_Constr_Ctrl} can be re-written as
{
\begin{equation}
\label{eq:SplitLast_red}
\begin{array}{ll}
	\begin{array}{cl}
	\underset{\left\lbrace
				\tilde{f}_\mathbf{U}^k
				\right\rbrace_{\mathcal{K}}
				}{\min} 
			& \mathcal{D}_{\KL} \left( f^{n} || \Ipdf^{n} \right)  = \\
	\text{ s.t.:} 
			&  \left \lbrace \constrVec{\mathbf{u}}{k} \right \rbrace_{\mathcal{K}}
	\end{array} \\
	\eqlmarg
	\begin{array}{lcl}	
	 = &	\underset{\left\lbrace
				\tilde{f}_\mathbf{U}^k
				\right\rbrace_{k = 1}^{n-1}}{\min} 
			& \left\lbrace 	\mathcal{D}_{\KL} \left( f^{n-1} || \Ipdf^{n-1} \right) + B^\ast_n 	\right\rbrace \\
	   & \text{ s.t.:} 
	 		&  \left \lbrace \constrVec{\mathbf{u}}{k} \right \rbrace_{k = 1}^{n-1}
	\end{array} 
\end{array}
\end{equation}
}
where:
{
\begin{subequations}
\textcolor{black}{\begin{equation}
\label{eq:BnStar_Ctrl_constr}
	B^\ast_n := 
		\underset{\tilde{f}_{\mathbf{U}}^n}{\min} 
		 B_n 
		 \ \ \ 
				\text{ s.t.:} \   \constrVec{\mathbf{u}}{n}
\end{equation}}
and
\begin{equation}
B_n 
:=
\mathbb{E}_{f^{n-1}}
\left[
\mathcal{D}_{\KL}
\left(
\tilde{f}^n ||
\tilde{\Ipdf}^n
\right)
\right].
\end{equation}
\end{subequations}
%
}

That is, Problem \ref{prob:Main_Constr_Ctrl} can be approached by solving first the optimization of the last iteration of the horizon $\mathcal{K}$ 
(the term $B_n$ in \eqref{eq:SplitLast_red}) and then by taking into account the result from this optimization problem in the optimization up to iteration ${n-1}$. 

{\bf Step 2.} We first observe that, for $B_n$ defined in (\ref{eq:BnStar_Ctrl_constr}):
\begin{equation*}
B_n 
=
\mathbb{E}_{f^{n-1}}
\left[
\mathcal{D}_{\KL}
\left(
\tilde{f}^n ||
\tilde{\Ipdf}^n
\right)
\right]
=
\mathbb{E}_{p^{n-1}_\mathbf{X}}
\left[
\mathcal{D}_{\KL}
\left(
\tilde{f}^n ||
\tilde{\Ipdf}^n
\right)
\right].
\end{equation*}
The above expression was obtained by \textcolor{black}{noticing that, by definition of $\tilde{f}^n$ and $\tilde{g}^n$ (see soon before (\ref{eq:CL_red_v1_short}) and (\ref{eq:CL_red_ref_short})),} $\mathcal{D}_{\KL} \left( \tilde{f}^n || \tilde{\Ipdf}^n \right)$ is  a function of the previous state and, to stress this in the notation, we let $\hat{A}\left( \cdot \right):=\mathcal{D}_{\KL} \left( \tilde{f}^n || \tilde{\Ipdf}^n \right)$. Hence, $B_n$ can be written as 
\begin{equation}
\label{eq:Bn_Ctrl_constr}
B_n 
=
\mathbb{E}_{p^{n-1}_\mathbf{X}}
\left[
\mathcal{D}_{\KL}
\left(
\tilde{f}^n ||
\tilde{\Ipdf}^n
\right)
\right]
=
\mathbb{E}_{p^{n-1}_\mathbf{X}}
\left[ \hat{A}\left(\mathbf{X}_{n-1}\right) \right],
\end{equation}
and the sub-problem in (\ref{eq:BnStar_Ctrl_constr}) becomes:
\textcolor{black}{\begin{equation}\label{eq:Prob_Bn_StarCtrl_constr}
B^\ast_n = \underset{\tilde{f}_{\mathbf{U}}^n}{\min} 
		\mathbb{E}_{p^{n-1}_\mathbf{X}} \left[ \hat{A}\left(\mathbf{X}_{n-1}\right) \right] 
	\ \ \  \text{s.t.:}   \ \constrVec{\mathbf{u}}{n} 
\end{equation}
}
Now, note that
{\begin{equation}
\label{eqn:equalityproof_thm}
\begin{array}{cll}
	\underset{\tilde{f}_{\mathbf{U}}^n}{\min} 
		& \mathbb{E}_{p^{n-1}_\mathbf{X}}\left[ \hat{A}\left(\mathbf{X}_{n-1}\right) \right]  
			& = \mathbb{E}_{p^{n-1}_\mathbf{X}}\left[ A^\ast_n \right],\\
 \text{ s.t.:} &  \constrVec{\mathbf{u}}{n}
\end{array} 
\end{equation} 
}
where 
\textcolor{black}{\begin{equation}
\label{eq:AnStars_Ctrl_constr}
A^{\ast}_n  := \underset{\tilde{f}_\mathbf{u}^n}{\min} \hat{A}(\mathbf{x}_{n-1}) \\
	 \ \ \ \text{s.t.:} \  \constrVec{\mathbf{u}}{n} 
\end{equation}
}
Also, the equality in (\ref{eqn:equalityproof_thm}) was obtained by using the fact that the expectation operator is linear and the fact that the decision variable 
(i.e. $\tilde{f}_\mathbf{U}^n$) is independent on the pdf over which the expectation is performed (i.e. $p^{n-1}_\mathbf{X}$). 

Following (\ref{eqn:equalityproof_thm}), we can obtain $B_n^\ast$ by solving \eqref{eq:AnStars_Ctrl_constr} and then by averaging $A_n^\ast$ over $p_{\mathbf{X}}^{n-1}$. We now focus on solving problem \eqref{eq:AnStars_Ctrl_constr}. From \eqref{eq:Bn_Ctrl_constr}, we get:
\begin{subequations}
	\begin{equation}
	\label{eq:An_expl_of_alpha}
	\hat{A}\left(\mathbf{x}_{n-1}\right)
	=
	\int
	\tilde{f}^n_\mathbf{U}
	\left[
	\ln
	\left(
	\frac{
		\tilde{f}^n_\mathbf{U}
	}{
		\tilde{\Ipdf}^n_\mathbf{U}
	}
	\right)
	+
	\hat{\alpha}
	\left( \mathbf{u}_n,\mathbf{x}_{n-1} \right)
	\right]\,
	d\mathbf{u}_{n},
	\end{equation}	
	\begin{equation}	\label{eq:alpha_time_n}
	\hat{\alpha}
	\left( \mathbf{u}_n,\mathbf{x}_{n-1} \right)
	:=
	\mathcal{D}_{\KL} 
	\left( \tilde{f}^n_\mathbf{X} || \tilde{\Ipdf}^n_\mathbf{X} \right).
	\end{equation}
\end{subequations}
In turn, \eqref{eq:An_expl_of_alpha} can be compactly written as:
\begin{equation*}
\hat{A}(\mathbf{x}_{n-1}) 
= 
\mathcal{D}_{\KL} \left( \tilde{f}_\mathbf{U}^n || \tilde{\Ipdf}_\mathbf{U}^n \right) 
+\int{\tilde{f}_\mathbf{U}^n \,
	\hat{\alpha} \left( \mathbf{u}_n,\mathbf{x}_{n-1} \right) 	\,
	d\mathbf{u}_n},
\end{equation*}
where we used the definition of \KL-divergence.

Hence, Lemma \ref{lem:Constrained_KL} can be used to solve the optimization problem in \eqref{eq:AnStars_Ctrl_constr}. 
Indeed by applying Lemma~\ref{lem:Constrained_KL} with: 
$\mathbf{Z}=\mathbf{U}_n$, 
$\textcolor{black}{f(\mathbf{z})}=\tilde{f}_\mathbf{U}^n$, $\textcolor{black}{g(\mathbf{z})}=\tilde{g}_\mathbf{U}^n$,
$\alpha(\cdot) = \hat{\alpha}(\cdot,\mathbf{x}_{n-1} )$,
$h_j(\mathbf{z}) = h_{\mathbf{u},j}^n(\mathbf{u}_n)$,
$H_j = H_{\mathbf{u},j}^n$,
$c_j [\cdot]= c_{\mathbf{u},j}^n [\cdot] $,
$\lambda_{j} = \lambda_{\mathbf{u},j}^n$, 
$\mathcal{E} = \mathcal{E}^n$,
$\mathcal{I} = \mathcal{I}^n$,
we get the following solution to \eqref{eq:AnStars_Ctrl_constr}:
\begin{equation}
\label{eq:opt_ctrl_n}
\left( \tilde{f}_\mathbf{U}^n \right)^\ast =
\tilde{\Ipdf}_\mathbf{U}^n 
\frac{e^{-\lbrace 
		\hat{\alpha} \left( \mathbf{u}_n,\,\mathbf{x}_{n-1} \right) +
		\sum_{j \in \textcolor{black}{\mathcal{I}_a}^n  \smallsetminus \lbrace 0 \rbrace }
		\left( \lambda_{\mathbf{u},j}^n \right)^\ast
		h_{\mathbf{u},j}^n \left( \mathbf{u}_n \right)	
		\rbrace}}{e^{1+ \left( \lambda_{\mathbf{u},0}^n \right)^* } },
\end{equation}
where $\textcolor{black}{\mathcal{I}_a}^n$ is the active set index associated to {$\left(\tilde f_{\mathbf{U}}^n\right)^\ast$}.  
In the above pdf, $\left( \lambda_{\mathbf{u},j}^n \right)^\ast, j \in \mathcal{E}_0^n \cup \mathcal{I}^n$ are the  \textcolor{black}{Lagrange multipliers} at the last  iteration $k =n$. Now, following Lemma \ref{lem:Constrained_KL} and Remark \ref{rem:lambda_0},  {the  \textcolor{black}{Lagrange multipliers} 
$\left( \boldsymbol{\lambda}_{\mathbf{u}}^n \right)^\ast = 
\left[\left(\lambda_{\mathbf{u},0}^n\right)^\ast,
\left(\lambda_{\mathbf{u},1}^n\right)^\ast, \ldots, \left(\lambda_{\mathbf{u},n_e^n+n_l^n}^n\right)^\ast \right]^T$ are computed by solving
}
\begin{equation*}
\begin{array}{llll} 
{ \left( \boldsymbol{\lambda}_{\mathbf{u}}^n \right)^\ast \in \arg }
& \underset{ }{ \max } 	& \mathcal{L}^{D} \left(  \boldsymbol{\lambda}_{\mathbf{u}}^n\right) \\	
& \text{s.t.:} 								& \lambda_{\mathbf{u},j}^n \text{ free}, 	& { \forall j \in \mathcal{E}^n }, \\
& 											& \lambda_{\mathbf{u},j}^n \geq 0,  		& { \forall j \in \mathcal{I}^n }\\ 	
\end{array} 
\end{equation*}
{choosing $\left[\left(\lambda_{\mathbf{u},1}^n\right)^\ast, \ldots, \left(\lambda_{\mathbf{u},n_e^n+n_l^n}^n\right)^\ast \right]^T$ so that $\left(\tilde f_{\mathbf{U}}^n\right)^\ast$ is feasible.} In the above expression \textcolor{black}{$\mathcal{L}^{D} 
\left( \boldsymbol{\lambda}_{\mathbf{u}}^n\right)$ is defined as:}
\textcolor{black}{\begin{equation*}
\begin{split}
\mathcal{L}^{D} 
\left( \boldsymbol{\lambda}_{\mathbf{u}}^n\right)  
& = 
- \sum_{j \in \mathcal{E}^n \cup  \mathcal{I}^n } 
\lambda_{\mathbf{u},j}^n  
H_{\mathbf{u},j}^n 	\,-
 \ln 
\left( 	
\int
\tilde{\Ipdf}_\mathbf{U}^n
\;
e^{- 
	\hat{\alpha} \left( \mathbf{u}_n,\,\mathbf{x}_{n-1} \right) 
} 
\right. \\
& \cdot
\left.
e^{- 
	\left \lbrace
	\sum_{ j \in \mathcal{E}^{{n}} \cup  \mathcal{I}^{{n}} }
	\lambda_{\mathbf{u},j}^n  \,
	h_{\mathbf{u},j}^n \left( \mathbf{z} \right)
	\right \rbrace 
}
d\mathbf{u}_n \right),
\end{split}
\end{equation*}
}
{and $\left( \lambda_{\mathbf{u},0}^n \right)^\ast$ can be obtained from all the other  \textcolor{black}{Lagrange multipliers} by normalizing (\ref{eq:opt_ctrl_n}), i.e.: }
{\begin{equation*}
	\begin{array}{ll}
		\left( \lambda_{\mathbf{u},0}^n \right)^\ast + 1 
	& = \ln 
		\left( 
		\int 
		\tilde{\Ipdf}_\mathbf{U}^n 
		e^{- \hat{\alpha} \left( \mathbf{u}_n,\,\mathbf{x}_{n-1} \right) }	
		\right. \\
	&	\eqlmarg \;
		\left. 
		e^{ -\left \lbrace 
			\sum_{j \in \textcolor{black}{\mathcal{I}_a}^n  \smallsetminus \lbrace 0 \rbrace }
			\left( \lambda_{\mathbf{u},j}^n \right)^\ast
			h_{\mathbf{u},j}^n \left( \mathbf{u}_n \right)	
			\right \rbrace }	
		\; d\mathbf{u}_n  
		\right)
		\\
	& = \ln 
		\left(
		\hat{\gamma}_{\mathbf{u},0}^n	
		\left( \mathbf{x}_{n-1} \right)
		\right) .
	\end{array}
\end{equation*}}
Moreover, from Lemma \ref{lem:Constrained_KL}, the minimum of the problem in \eqref{eq:AnStars_Ctrl_constr} is given by:
\begin{equation*}
\hat{A}^\ast_n =
- 	\left(
1 + \sum_{j \in \textcolor{black}{\mathcal{I}_a}^n} 
	\left(
		\lambda_{\mathbf{u},j}^n\right)^\ast H_{\mathbf{u},j}^n	
	\right),
\end{equation*}
which, using the definitions in \eqref{eq:def_gamma_0} and \eqref{eq:def_gamma_i}, can be equivalently written as
\begin{equation*}
\hat{A}^\ast_n
=
-
\left[
\sum_{j \in \textcolor{black}{\mathcal{I}_a}^n} 
\ln
\left(
\hat{\gamma}_{\mathbf{u},j}^n				
\left(
\mathbf{x}_{n-1}
\right)
\right) 	
\right] 
=
- \ln \hat{\gamma} 
	\left(
	\mathbf{x}_{n-1}
	\right).
\end{equation*}
Thus, we get:
\begin{equation}
\label{eq:ctrl_contrst_min_Bn}
B^\ast_n
=
- \mathbb{E}_{p^{n-1}_\mathbf{X}}
\left[
\ln \hat{\gamma} 
\left(
\mathbf{X}_{n-1}
\right)
\right].
\end{equation}
	
{\bf Step 3.} Note now that the $B^\ast_n$ in (\ref{eq:ctrl_contrst_min_Bn}) only depends on $\mathbf{X}_{n-1}$ and therefore the original problem \eqref{eq:SplitLast_red} can be split, following Lemma \ref{proper:KLdiv_split_nD_fn}, as
{\begin{equation}
\label{eq:Split2Last_red_explicit}
\begin{array}{ll}
\begin{array}{cl}
\underset{\left\lbrace
	\tilde{f}_\mathbf{U}^k
	\right\rbrace_{k = 1}^{n-1}}{\min} 
&\left\lbrace \mathcal{D}_{\KL} \left( f^{n-1} || \Ipdf^{n-1} \right) + B^*_{n} \right\rbrace   \\
\text{ s.t.:} 
&  	\left \lbrace 	\constrVec{\mathbf{u}}{k} \right \rbrace_{k = 1}^{n-1}  
\end{array} \\
\eqlmarg
\begin{array}{lcl}
= & \underset{\left\lbrace 	\tilde{f}_\mathbf{U}^k \right\rbrace_{k = 1}^{n-2}}{\min} 
		&\left\lbrace \mathcal{D}_{\KL} \left( f^{n-2} || \Ipdf^{n-2} \right) + B^*_{n-1} \right\rbrace \\
  &	\text{ s.t.:} 
 		&  	\left \lbrace 	\constrVec{\mathbf{u}}{k} \right \rbrace_{k = 1}^{n-2} 
\end{array}
\end{array}
\end{equation}
}
where:
\begin{subequations}
\textcolor{black}{\begin{equation}
B^\ast_{n-1}  :=\underset{{\tilde{f}_\mathbf{U}^{n-1}}}{\min}  B_{n-1} \ \ \ \textnormal{s.t.:} \ \constrVec{\mathbf{u}}{n-1}
\end{equation} }
and
\begin{equation}
\label{eq:Bnm1_Ctrl_constr}
B_{n-1} 
:=
\mathbb{E}_{f^{n-2}}
\left[
\mathcal{D}_{\KL}
\left(
\tilde{f}^{n-1} ||
\tilde{\Ipdf}^{n-1}
\right)
\right]+ B^\ast_n.
\end{equation}
\end{subequations}
We approach the above problem in the same way we used to solve the problem in (\ref{eq:Prob_Bn_StarCtrl_constr}). We do this by finding a function, $\hat{A}\left(\mathbf{x}_{n-2}\right)$, such that $B_{n-1} 
=
\mathbb{E}_{p^{n-2}_\mathbf{X}}
\left[ \hat{A}\left(\mathbf{X}_{n-2}\right) \right]$. 
Once this is done, we then get
\textcolor{black}{\begin{equation}
\label{eq:Anm1Stars_Ctrl_constr}
A^{\ast}_{n-1} := \underset{\tilde{f}_\mathbf{U}^{n-1}}{\min}   \hat{A}(\mathbf{x}_{n-2}) \ \ \ \text{ s.t.:} \   \constrVec{\mathbf{u}}{n-1}  
\end{equation}
}
and obtain  $B^*_{n-1}$ as $B^\ast_{n-1}
\begin{array}{ll}
:=
\mathbb{E}_{p^{n-2}_\mathbf{X}}
\left[
A^*_{n-1}
\right]
\end{array}$. 
To this end we first note that the following identities 
\begin{subequations}
\begin{equation}
\label{eq:Epn_Epnm_Etilfnm}
\mathbb{E}_{p^{n-1}_\mathbf{X}}
\left[ 
\varphi \left(\mathbf{X}_{n-1} \right)
\right]
=
\mathbb{E}_{p^{n-2}_\mathbf{X}}
\left[ 
\mathbb{E}_{\tilde{f}^{n-1}}
\left[ 
\varphi \left(\mathbf{X}_{n-1} \right)
\right]
\right],
\end{equation}
\begin{equation}
\mathbb{E}_{\tilde{f}^{n-1}_\mathbf{X}}
\left[ 
\varphi \left(\mathbf{X}_{n-1} \right)
\right]
=
\mathbb{E}_{\tilde{f}^{n-2}_\mathbf{X}}
\left[ 
\mathbb{E}_{\tilde{f}^{n-1}}
\left[ 
\varphi \left(\mathbf{X}_{n-1} \right)
\right]
\right],
\end{equation}
\end{subequations}
hold for any function $\varphi$ of $\mathbf{X}_{n-1}$. Therefore, by means of
\eqref{eq:ctrl_contrst_min_Bn} and \eqref{eq:Epn_Epnm_Etilfnm}
we obtain, from \eqref{eq:Bnm1_Ctrl_constr}:
\textcolor{black}{\begin{equation*}
\begin{split}
 & B_{n-1}   =  
 \mathbb{E}_{f^{n-2}}
\left[
\mathcal{D}_{\KL}
\left(
\tilde{f}^{n-1} ||
\tilde{\Ipdf}^{n-1}
\right)
\right]
+
B^\ast_{n} \\
& =
\mathbb{E}_{p^{n-2}_\mathbf{X}}
\left[
\mathcal{D}_{\KL}
\left(
\tilde{f}^{n-1} ||
\tilde{\Ipdf}^{n-1}
\right)
\right]
+
B^\ast_{n}  \\
& =
\mathbb{E}_{p^{n-2}_\mathbf{X}}
\left[
\mathcal{D}_{\KL}
\left( \tilde{f}^{n-1} || \tilde{\Ipdf}^{n-1} \right)
\right]  - 
\mathbb{E}_{p^{n-2}_\mathbf{X}}
\left[
\mathbb{E}_{\tilde{f}^{n-1}}
\left[
\ln \hat{\gamma}\left( \mathbf{X}_{n-1} \right)	
\right]
\right]  \\
& =
 \mathbb{E}_{p^{n-2}_\mathbf{x}}
\left[
\underbrace{
\mathcal{D}_{\KL}
\left(
\tilde{f}^{n-1} ||
\tilde{\Ipdf}^{n-1}
\right)
+
\mathbb{E}_{\tilde{f}^{n-1}}
\left[
- \ln \hat{\gamma}\left( \mathbf{X}_{n-1} \right)	
\right]
}_{=:\hat{A}\left(\mathbf{X}_{n-2}\right)  }
\right],  
\end{split}	
\end{equation*}
} 
and the term $\hat{A}\left(\mathbf{x}_{n-2}\right)$ can hence be recognized.
Now, following the same reasoning we used to obtain $\hat{A}(\mathbf{x}_{n-1})$, we explicitly write $\hat{A}(\mathbf{x}_{n-2})$ in compact form as
{\begin{equation}
\label{eq:Anm1_final}
\begin{array}{l}
 \hat{A}\left(\mathbf{x}_{n-2}\right) =\\ 
	\mathcal{D}_{\KL}
	\left(
	\tilde{f}^{n-1} ||
	\tilde{\Ipdf}^{n-1}
	\right)
	+
	\mathbb{E}_{\tilde{f}^{n-1}_\mathbf{U}}
	\left[
	\mathbb{E}_{\tilde{f}^{n-1}_\mathbf{X}}
	\left[
	-
	\ln \hat{\gamma}\left( \mathbf{X}_{n-1} \right)	
	\right]
	\right] =   \\  
	\int
	\tilde{f}^{n-1}_\mathbf{U}
	\left\lbrace
	\ln
	\left(
	\frac{
		\tilde{f}^{n-1}_\mathbf{U}
	}{
		\tilde{\Ipdf}^{n-1}_\mathbf{U}
	}
	\right)
	+
	\hat{\omega} \left( \mathbf{u}_{n-1},\,\mathbf{x}_{n-2} \right)
	\right\rbrace
	\, d\mathbf{u}_{n-1}\, ,
\end{array}
\end{equation} 
}
where $\hat{\omega} \left( \mathbf{u}_{n-1},\,\mathbf{x}_{n-2} \right) = \hat{\alpha} \left( \mathbf{u}_{n-1},\,\mathbf{x}_{n-2} \right)+  \hat{\beta} \left( \mathbf{u}_{n-1},\,\mathbf{x}_{n-2} \right)$ and
\begin{equation*}
\begin{array}{rll}
\hat{\alpha} \left( \mathbf{u}_{n-1},\,\mathbf{x}_{n-2} \right)
& :=
\mathcal{D}_{\KL} \left( \tilde{f}^{n-1}_\mathbf{X} || \tilde{\Ipdf}^{n-1}_\mathbf{X} \right),	\\		
\hat{\beta} \left( \mathbf{u}_{n-1},\,\mathbf{x}_{n-2} \right)
& :=
- \mathbb{E}_{\tilde{f}^{n-1}_\mathbf{X}}
\left[
\ln\hat{\gamma}\left( \mathbf{X}_{n-1} \right)	
\right].
\end{array}
\end{equation*}
The last expression we found for $\hat{A}(\mathbf{x}_{n-2})$ in \eqref{eq:Anm1_final} enables us to use Lemma~\ref{lem:Constrained_KL} in order to solve the optimization problem in \eqref{eq:Anm1Stars_Ctrl_constr}. This time, by applying Lemma~\ref{lem:Constrained_KL} with $\mathbf{Z}=\mathbf{U}_{n-1}$, 
$\textcolor{black}{f(\mathbf{z})}=\tilde{f}_\mathbf{U}^{n-1}$, $\textcolor{black}{g(\mathbf{z})}=\tilde{g}_\mathbf{U}^{n-1}$,
$\alpha(\cdot) = \hat{\omega}(\cdot,\mathbf{x}_{n-2} )$,
$h_j(\mathbf{z}) = h_{\mathbf{u},j}^{n-1}(\mathbf{u}_{n-1})$,
$H_j = H_{\mathbf{u},j}^{n-1}$,
$c_j[\cdot] = c_{\mathbf{u},j}^{n-1}[\cdot]$,
$\lambda_{j} = \lambda_{\mathbf{u},j}^{n-1}$, 
$\mathcal{E} = \mathcal{E}^{n-1}$,
$\mathcal{I} = \mathcal{I}^{n-1}$,
we get the following solution to \eqref{eq:Anm1Stars_Ctrl_constr}:
\begin{equation*}
\begin{split}
&\left( \tilde{f}_\mathbf{U}^{n-1} \right)^\ast = \\
& \tilde{\Ipdf}_\mathbf{U}^{n-1} 
\frac{e^{-\lbrace 
		\hat{\omega} \left( \mathbf{u}_{n-1},\,\mathbf{x}_{n-2} \right) +
		\sum_{j \in \textcolor{black}{\mathcal{I}_a}^{n-1} \smallsetminus \lbrace 0 \rbrace }
		\left( \lambda_{\mathbf{u},j}^{n-1} \right)^\ast
		h_{\mathbf{u},j}^{n-1} \left( \mathbf{u}_{n-1} \right)	
		\rbrace
}}{ 	e^{1+ \left( \lambda_{\mathbf{u},0}^{n-1} \right)^\ast }	},
\end{split}
\end{equation*}
where $\textcolor{black}{\mathcal{I}_a}^{n-1}$ is the active set index associated to $\left(\tilde f_{\boldsymbol{U}}^{n-1}\right)^{{\ast}}$ and the  \textcolor{black}{Lagrange multipliers} can again be obtained from Lemma \ref{lem:Constrained_KL}. Namely:
\begin{itemize}
\item {
$\left( \boldsymbol{\lambda}_{\mathbf{u}}^{n-1} \right)^* = 
\left[\left(\lambda_{\mathbf{u},0}^{n-1}\right)^\ast,
\left(\lambda_{\mathbf{u},1}^{n-1}\right)^\ast, \ldots, 
\left(\lambda_{\mathbf{u},n_e^n+n_l^n}^{n-1}\right)^\ast \right]^T$} are the solution to
\begin{equation*}
\begin{array}{llll} 
{	\left( \boldsymbol{\lambda}_{\mathbf{u}}^{n-1} \right)^* \in \arg }
& \underset{ }{ \max } 	& \mathcal{L}^{D} \left(  \boldsymbol{\lambda}_{\mathbf{u}}^{n-1}\right) \\	
& \text{s.t.:} 								& \lambda_{\mathbf{u},j}^{n-1} \text{ free}, 	& { \forall j \in \mathcal{E}^{n-1}},  \\
& 											& \lambda_{\mathbf{u},j}^{n-1} \geq 0,  		& { \forall j \in \mathcal{I}^{n-1}} \\ 	
\end{array} 
\end{equation*}
where 
{
\begin{equation*}
\begin{array}{ll}
 & \mathcal{L}^{D} \left( \boldsymbol{\lambda}_{\mathbf{u}}^{n-1} \right) 
   = 
- \sum_{j \in \mathcal{E}^{n-1} \cup  \mathcal{I}^{n-1} }
\lambda_{\mathbf{u},j}^{n-1}  
H_{\mathbf{u},j}^{n-1} 	 \\
& \eqlmarg
 - \ln 
\left( 	
\int
\tilde{\Ipdf}_\mathbf{U}^{n-1} 
\;
e^{- 	
	\hat{\omega} \left( \mathbf{u}_{n-1},\,\mathbf{x}_{n-2} \right)  +
}\right.\\
& \eqlmarg 
\left.
e^{- 
	\left \lbrace
	\sum_{ j \in \mathcal{E}^{\textcolor{black}{n-1}} \cup  \mathcal{I}^{\textcolor{black}{n-1}} }
	\lambda_{\mathbf{u},j}^{n-1}  
	h_{\mathbf{u},j}^{n-1} \left( \mathbf{u}_{n-1} \right)
	\right \rbrace 
}
d\mathbf{u}_{n-1}
\right);
\end{array}
\end{equation*}
}
\item  $ \left( \lambda_{\mathbf{u},0}^{n-1} \right)^\ast $ is given by 
{\begin{equation*}
\begin{array}{ll}
&  \left( \lambda_{\mathbf{u},0}^{n-1} \right)^\ast + 1  
= 
\ln 
\left\lbrace 
\int 
\tilde{\Ipdf}_\mathbf{U}^{n-1} 
e^{-\left\lbrace 
	\hat{\omega} \left( \mathbf{u}_{n-1},\,\mathbf{x}_{n-2} \right) 
	\right\rbrace}
\right. \\	
& \left.	
e^{-\left\lbrace 
	\sum_{j \in \textcolor{black}{\mathcal{I}_a}^{n-1}  \smallsetminus \lbrace 0 
		\rbrace }
	\left( \lambda_{\mathbf{u},j}^{n-1} \right)^\ast
	h_{\mathbf{u},j}^{n-1} \left( \mathbf{u}_{n-1} \right)	
	\right\rbrace}	
\; d\mathbf{u}_{n-1}  
\right\rbrace 
\\
& =\ln 
\left\lbrace
\hat{\gamma}_{\mathbf{u},0}^{n-1}	
\left( \mathbf{x}_{n-2} \right)
\right\rbrace .
\end{array}
\end{equation*}
}
\end{itemize}
Moreover, from Lemma \ref{lem:Constrained_KL} we also obtain:
\begin{equation*}
\begin{array}{rl}
B^\ast_{n-1} 
& = 	 
- \mathbb{E}_{\tilde{f}^{n-2}_\mathbf{X}}
\left[
\sum_{j \in \textcolor{black}{\mathcal{I}_a}^{n-1}}
\ln 
\hat{\gamma}_{\mathbf{u},j}^{n-1} \left( \mathbf{X}_{n-2} \right) 	
\right]  \\
& = 	 
- \mathbb{E}_{\tilde{f}^{n-2}_\mathbf{X}}
\left[
\ln \hat{\gamma} \left( \mathbf{X}_{n-2} \right)	
\right] .
\end{array}  	
\end{equation*}	

{\bf Step 4.} The proof can then be concluded by observing that,  using $B^\ast_{n-1}$ in (\ref{eq:Split2Last_red_explicit}), the optimization can again be split in sub-problems, with the {\em last} sub-problem (i.e. the sub-problem corresponding to $k= {n-2}$) being independent from the others and having the same structure as the problem we solved at $k=n-1$. Hence, the solution at the generic iteration $k$, i.e. $\left(\tilde{f}_\mathbf{U}^{k} \right)^\ast$, will have the same structure as $\left(\tilde{f}_\mathbf{U}^{n-1} \right)^\ast$, with the functions $\hat{\alpha}(\cdot,\cdot)$, $\hat{\beta}(\cdot,\cdot)$, $\hat{\omega}(\cdot)$ given by (\ref{eq:omega_defdef}) - (\ref{eq:def:alpha_beta})  and the  \textcolor{black}{Lagrange multipliers} given by (\ref{eq:probl_dual_k_Thm_1}) - (\ref{eqn:lambda0_thm}). This leads to the expression in (\ref{eq:opt_ctrl}) with the minimum given in (\ref{eq:ctrl_contrst_minVal}). Finally, for the last iteration ($k=n$) we note from (\ref{eq:alpha_time_n}) that $\hat{\beta} \left( \mathbf{u}_{n},\,\mathbf{x}_{n-1} \right) = 0$ which, by means of (\ref{eq:def_gamma}), implies that $\hat{\gamma}_{\mathbf{u},{j}}^{n+1} \left(\mathbf{x}_{n} \right) = 1$, $\forall j$. This gives the terminal conditions in (\ref{eq:def_gamma_0}) and (\ref{eq:def_gamma_i}). The proof is then completed.
\end{proof}

\textcolor{black}{Finally, we close the section with the following remark:
\begin{Rmk}\label{rem:maxent_maxlikelihood_comaprison}
From the proof of Theorem \ref{theo:ctrlConstr}, we get that,  at time-step $k$, $\left( \tilde{f}_\mathbf{U}^k \right)^\ast$  is obtained as
\begin{equation*}
	\begin{array}{lcll} 
	{ \left( \tilde{f}_\mathbf{U}^k \right)^\ast
	 \in \arg } 
	& \underset{{\tilde{f}_\mathbf{U}^k}}{\min} 
		& \mathcal{D}_{\KL}\left( \tilde{f}^k_{\mathbf{U}}||\tilde{g}^k_{\mathbf{U}} \right)+\E_{ \tilde{f}^k_{\mathbf{U}}}\left[\hat{\omega}\left(\mathbf{U}_k,\mathbf{X}_{k-1}\right)\right]\\
	& { \textnormal{s.t.:} }  	
		& c_{\mathbf{u},j}^k \left[ \tilde{f}_\mathbf{U}^k  \right]= 0, 	
			\ \  {\forall j \in\mathcal{E}_0^k}, \\
	&	
		& c_{\mathbf{u},j}^k \left[ \tilde{f}_\mathbf{U}^k  \right]\leq 0,  
			\ \ { \forall j \in\mathcal{I}^k}\\	
	\end{array} 
\end{equation*}
where $\hat{\omega}\left( \cdot,\,\cdot \right)$ is defined as in Theorem \ref{theo:ctrlConstr}. If for the above problem: (i) $ \hat{\omega} \left( \cdot,\,\cdot \right)$ is constant in the first argument; (ii) $\tilde{g}_\mathbf{U}^k$ is a uniform distribution on a compact support. Then,  its minimizer can also be  found via the  constrained maximum entropy problem:
\begin{equation*}
	\begin{array}{lcll} 
	{ \left( \tilde{f}_\mathbf{U}^k \right)^\ast
	 \in \arg } 
	& \underset{{\tilde{f}_\mathbf{U}^k}}{\max} 
		&  -\E_{\tilde{f}_\mathbf{U}^k}\left[\ln \tilde{f}_\mathbf{U}^k\right] \\
	& { \textnormal{s.t.:} }  	
		& c_{\mathbf{u},j}^k \left[ \tilde{f}_\mathbf{U}^k  \right]= 0, 	
			\ \  {\forall j \in\mathcal{E}_0^k},\\
	&	
		& c_{\mathbf{u},j}^k \left[ \tilde{f}_\mathbf{U}^k  \right]\leq 0,  
			\ \ { \forall j \in\mathcal{I}^k}\\	
	\end{array} 
\end{equation*}
\end{Rmk}}

\section{An algorithm from Theorem \ref{theo:ctrlConstr} }\label{sec:algorithm}
{Theorem \ref{theo:ctrlConstr} gives an explicit expression for synthesizing the control policy and we leverage this to turn the result into an algorithmic procedure. The algorithm introduced here takes as input $\Ipdf \left( \textcolor{black}{\boldsymbol{\Delta}}_e^n \right)$, $\tilde f_{\mathbf{X}}^k:=f(\boldsymbol{x}_k|\boldsymbol{u}_k,\boldsymbol{x}_{k-1})$ and the constraints of Problem \ref{prob:Main_Constr_Ctrl} (if any). The pdfs, as further illustrated in the next section, can be obtained from the \textcolor{black}{data}. Given this input, the algorithm outputs  
$\left\lbrace \left( \tilde{f}_\mathbf{U}^k \right)^\ast \right\rbrace_{k \in \mathcal{K}}$ 
solving Problem \ref{prob:Main_Constr_Ctrl}. \textcolor{black}{Then,  at each $k$, the control input,  $\bf{u}_k$,  applied to the system is obtained by sampling from $\left( \tilde{f}_\mathbf{U}^k \right)^\ast$.} The key steps of the proposed algorithm are summarized as pseudo-code in Algorithm \ref{algo:FPD_Analytical_pseudocode}.
\begin{algorithm}[H]
	\caption{Pseudo-code }
	\label{algo:FPD_Analytical_pseudocode}
	\begin{algorithmic}	
			\State \textbf{Inputs:} 
		\State $\Ipdf \left( \textcolor{black}{\boldsymbol{\Delta}}_e^n \right)$, $\tilde f_{\mathbf{X}}^k$ and constraints of Problem \ref{prob:Main_Constr_Ctrl}
		\State \textbf{Output:}
		\State $\left \lbrace
	\left( \tilde{f}_\mathbf{U}^k \right)^*
	\right  \rbrace_{k \in \mathcal{K}}$ solving Problem \ref{prob:Main_Constr_Ctrl}
		\State \textbf{Initialize} 
		\State {$\hat{\gamma}_{\mathbf{u},{j}}^{n+1} \left( \mathbf{x}_{\Nh} \right) = 1$, $\forall j $};
		\State {$ 
	\hat{\gamma}
	\left(
	\mathbf{x}_{n}
	\right)
	\gets \exp\left[
	\sum_j	\ln
	\left(
	\hat{\gamma}_{\mathbf{u},j}^{n+1}				
	\left(
	\mathbf{x}_{n}
	\right)
	\right) 	
	\right]$};
		\For{$ k = \Nh$  to $1$}		 
		\State
			$\hat{\alpha} \left( \mathbf{u}_{k},	\mathbf{x}_{k-1} \right) 
		 	\gets \int \textcolor{black}{\tilde f_{\mathbf{X}}^k}
		 	{\ln}\frac{   \textcolor{black}{\tilde f_{\mathbf{X}}^k} 
		 	}{  \textcolor{black}{\tilde g_{\mathbf{X}}^k}
		 	} 
		  	\,d\mathbf{x}_{k}
			$;
		
		\State
			$ \hat{\beta} \left( \mathbf{u}_{k},	\mathbf{x}_{k-1} \right)
		 	\gets - \int 
			 \textcolor{black}{\tilde f_{\mathbf{X}}^k}
			\ln 
		 		\left(	 \hat{\gamma} \left( \mathbf{x}_{k} \right) 	\right)
			 \textcolor{black}{d\mathbf{x}_{k}}$;
			
		\State
			$ \hat{\omega} \left( \mathbf{u}_{k},	\mathbf{x}_{k-1} \right)
				\gets 	\hat{\alpha} \left( \mathbf{u}_{k},	\mathbf{x}_{k-1} \right) +
					\hat{\beta} \left( \mathbf{u}_{k},	\mathbf{x}_{k-1} \right) $;
			\State { Compute $\left[ \left(\lambda_{\mathbf{u},1}^k\right)^\ast,\ldots,\left(\lambda_{\mathbf{u},n_e^k+n_l^k}^k\right)^\ast\right]^T$
			by solving \eqref{eq:probl_dual_k_Thm_1} and  $\left(\lambda_{\mathbf{u},0}^k\right)^\ast$ using \eqref{eqn:lambda0_thm}; }	
			\State Compute the control policy:
			\begin{equation*}
			\begin{array}{l}
			\left( \tilde{f}_\mathbf{U}^k \right)^\ast\gets
			\tilde{\Ipdf}_\mathbf{U}^k 
			\frac{e^{-\lbrace 
					\hat{\omega} \left( \mathbf{u}_k,\,\mathbf{x}_{k-1} \right) +
					\sum_{j \in \textcolor{black}{\mathcal{I}_a}^k  \smallsetminus \lbrace 0 \rbrace }
					\left( \lambda_{\mathbf{u},j}^k \right)^\ast
					h_{\mathbf{u},j}^k \left( \mathbf{u}_k \right)	
					\rbrace}}{e^{1+ \left( \lambda_{\mathbf{u},0}^k \right)^* } };
			\end{array}
			\end{equation*} 
			\State Prepare variables for the next iteration, $k-1$:	
			\State 
			$\hat{\gamma}_{\mathbf{u},0}^k 
			\left(\mathbf{x}_{k-1} \right) 
			\gets 
			\exp{ \lbrace \left( \lambda_{\mathbf{u},0}^k \right)^* + 1  \rbrace}$ 
			\State				
			$\hat{\gamma}_{\mathbf{u},j}^k				
			\left(\mathbf{x}_{k-1} \right) \gets  
			\exp{ \lbrace 
					\left( \lambda_{\mathbf{u},j}^k \right)^\ast
				 	H_{\mathbf{u},j}^k	\rbrace}  	
			 		\quad j \in \mathcal{E}^k \cup  \mathcal{I}^k$ 
			\State
				{$ 
	\hat{\gamma}
	\left(
	\mathbf{x}_{k-1}
	\right)
	\gets \exp\left[
	\sum_{j \in \textcolor{black}{\mathcal{I}_a}^k }
	\ln
	\left(
	\hat{\gamma}_{\mathbf{u},j}^k				
	\left(
	\mathbf{x}_{k-1}
	\right)
	\right) 	
	\right]$}
	\EndFor
	\end{algorithmic}
\end{algorithm}
Algorithm \ref{algo:FPD_Analytical_pseudocode} is used to compute the control policy for the example of the next section.

\section{Numerical example}\label{sec:example}
We now illustrate the effectiveness of the results via a numerical example where  Algorithm  \ref{algo:FPD_Analytical_pseudocode} is used to synthesize, from data measured during test drives, a policy for the merging of a car on a highway.  \textcolor{black}{Specifically,  we leverage our results to synthesize a policy that makes the behavior of the controlled car similar to the behavior seen in the examples (collected through test drives) while satisfying some desired constraints on the control variable.} We first describe the scenario considered and the experimental set-up. Then, we describe the data we collected and the process we used to compute the pdfs for the algorithm. Finally, we discuss the results.

\noindent {\bf Scenario description and experimental set-up.} The scenario we considered is schematically illustrated in Figure \ref{fig:UseCase_Scketch_JA}, where a car is merging onto a highway. 
\begin{figure}[htbp]
	\centering	
	\includegraphics[width=0.7\columnwidth]{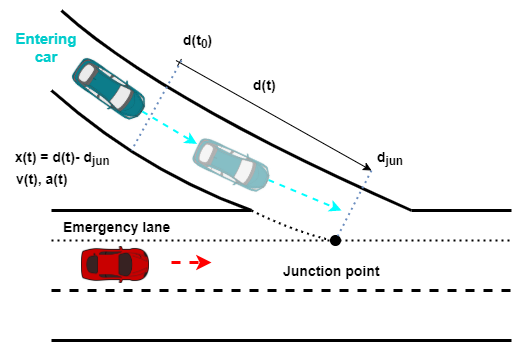}
	\caption{The scenario considered in Section \ref{sec:example}: the light-blue vehicle is trying to merge on a highway.}
	\label{fig:UseCase_Scketch_JA}
\end{figure}
The stretch of road where our experiments took place is outside University College Dublin (UCD)  and the highway is {\em Stillorgan Road} in Dublin $4$ (see Figure \ref{fig:UCMergingStillorgan}). Data were collected from a Toyota Prius using an OBD2 connection with a smartphone running the  Android apps {\em Hybrid Assistant} and its reporting tool {\em Hybrid Reporter}\footnote{See http://hybridassistant.blogspot.com/}. We collected the car GPS position and its longitudinal speed using the hardware-in-the-loop architecture of \citep{griggs2019vehicle}. The apps on the smartphone provided \textit{raw data} in a \texttt{.txt} file with each line reporting the quantities measured at a given time instant (the sampling period was of approximately $0.25$s).  Data were imported in Matlab and the car position was localized by cross-referencing these data with the road information from \textit{OpenStreetMap} \citep{OpenStreetMap2019}.
\begin{figure}[htbp]
	\centering		
	\includegraphics[width=0.9\columnwidth]{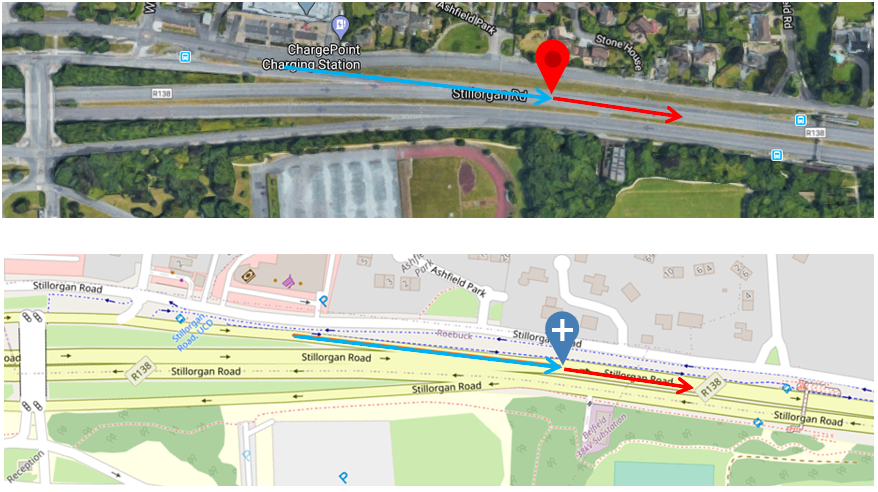}
	\caption{Area for the experiments: map view (from Google maps) and \textit{OpenStreetMap} representation.}
	\label{fig:UCMergingStillorgan}
\end{figure}

\noindent {\bf Collecting the data.}  We performed $100$ test drives. In each of the tests, data were collected within a $300$ meters observation window starting $200 \, m$ before the junction (i.e. after the UCD entrance). The  raw data  were processed to obtain the speed, acceleration and jerk profiles of the car as a function of the distance traveled within the observation window (see left panels in Figure \ref{fig:data}).  \textcolor{black}{Following the notation introduced in Section \ref{ss:Assumpt} (see also Remark \ref{rem:dataset}) the sequence of data collected from each test drive is a dataset. For notational convenience, we simply term the collection of these $100$} datasets  as \textit{complete} dataset in what follows. The vertical line in each panel highlights the physical location of the junction of Figure \ref{fig:UCMergingStillorgan}. From the complete dataset we extracted a subset of profiles that would serve as examples. In particular, we selected  the profiles with the lowest root mean square (RMS) value for the jerk, which is typically associated to a comfortable driving style, see e.g. \citep{Bae_Il_+al_2019_Electronics_Toward_comfortable_self_driving}. We used as example driving profiles, those having a RMS value for the jerk of {at most $0.16$}. This gave the subset of $20$ driving profiles \textcolor{black}{(i.e. a collection of $20$ datasets)} shown in the right panels of Figure \ref{fig:data}.  \textcolor{black}{We simply term the collection of these $20$ datasets as example dataset.}

\noindent {\bf Computing the pdfs.} We let $\mathbf{x}_k$ be the position of the car at the $k$-th time-step (i.e. $\mathbf{x}_k = {d_k}$) and $\mathbf{u}_k$ be its longitudinal speed (i.e. $\mathbf{u}_k =  v_k$).  \textcolor{black}{Following  \citep{8814935},} we computed the joint probability density functions as \textcolor{black}{empirical} distributions to obtain $f(\mathbf{x}_{k-1},\mathbf{u}_k)$ and $g(\mathbf{x}_{k-1},\mathbf{u}_k)$ from the complete and the example dataset respectively (see Figure \ref{fig:scatter_Closed_loop}). \textcolor{black}{Following the same process,} we also obtained $f(\mathbf{x}_{k},\mathbf{x}_{k-1},\mathbf{u}_k)$ and $g(\mathbf{x}_{k},\mathbf{x}_{k-1},\mathbf{u}_k)$ and conditioned these joint pdfs to get  $f(\mathbf{x}_k|\mathbf{x}_{k-1},\mathbf{u}_k) = f(\mathbf{x}_{k},\mathbf{x}_{k-1},\mathbf{u}_k) /f(\mathbf{x}_{k-1},\mathbf{u}_k)$ and $g(\mathbf{x}_k|\mathbf{x}_{k-1},\mathbf{u}_k)= g(\mathbf{x}_{k},\mathbf{x}_{k-1},\mathbf{u}_k) /g(\mathbf{x}_{k-1},\mathbf{u}_k)$. We then assumed $\tilde{f}_{\mathbf{X}}^k$ and  $\tilde{g}_{\mathbf{X}}^k$, two inputs to Algorithm \ref{algo:FPD_Analytical_pseudocode}, to be normal distributions \textcolor{black}{and estimated their parameters via least squares}.
This yielded $\tilde{f}_\mathbf{X}^{k} \sim \mathcal{N} \left(a_c \mathbf{x}_{k-1} + b_c \mathbf{u}_k, \sigma^2_c \right)$, $\tilde{g}_\mathbf{X}^{k} \sim \mathcal{N} 
\left(a_e \mathbf{x}_{k-1} + b_e \mathbf{u}_k, \sigma^2_e \right)$ with $a_c=0.9820$, $b_c = 0.2591 \, s$, $\sigma^2_c = 2.6118 \, m^2$ and $a_e= 0.9811$, $b_e = 0.2723 \, s$, $\sigma^2_e = 1.7622 \, m^2$. Finally, $\tilde{g}_{\mathbf{U}}^k$ was obtained \textcolor{black}{from the empirical pdfs as}
$ \tilde{g}_{\mathbf{U}}^k=  {g\left(\mathbf{x}_{k-1},\mathbf{u}_{k} \right)}/({\int g\left(\mathbf{x}_{k-1},\mathbf{u}_k  \right)d\mathbf{u}_k}).
$}
 		\textcolor{black}{\begin{Rmk}
 		The Gaussian assumption for $\tilde{f}_{\mathbf{X}}^k$ and  $\tilde{g}_{\mathbf{X}}^k$  is inspired by \citep{8287308,moser2015short}. These works considered the problem of making short term predictions of vehicles' trajectories.  See also references therein and our concluding remarks in Section \ref{sec:conclusions}.
 		\end{Rmk}}
\begin{figure}[H]
	\centering	
	\includegraphics[width=0.9\columnwidth, trim={0 0 0 0.77cm}, clip]{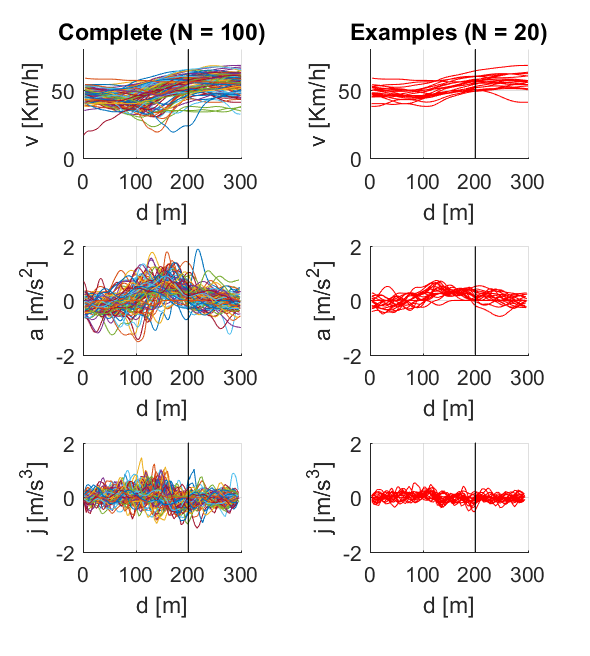}
	\caption{Left panels: driving profiles from the complete dataset of $100$ trips. Right panels: the subset of $20$ profiles used as examples. Acceleration and jerk  computed from the data.}
	\label{fig:data}
\end{figure}

\begin{figure}[htbp]
	\centering	
	\includegraphics[width=0.75\columnwidth]{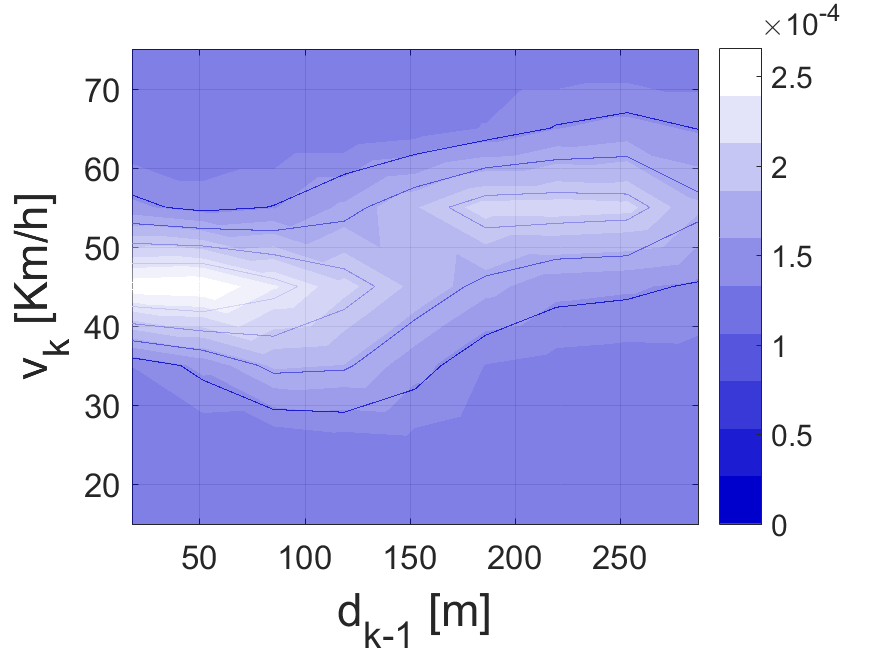}
	\includegraphics[width=0.75\columnwidth]{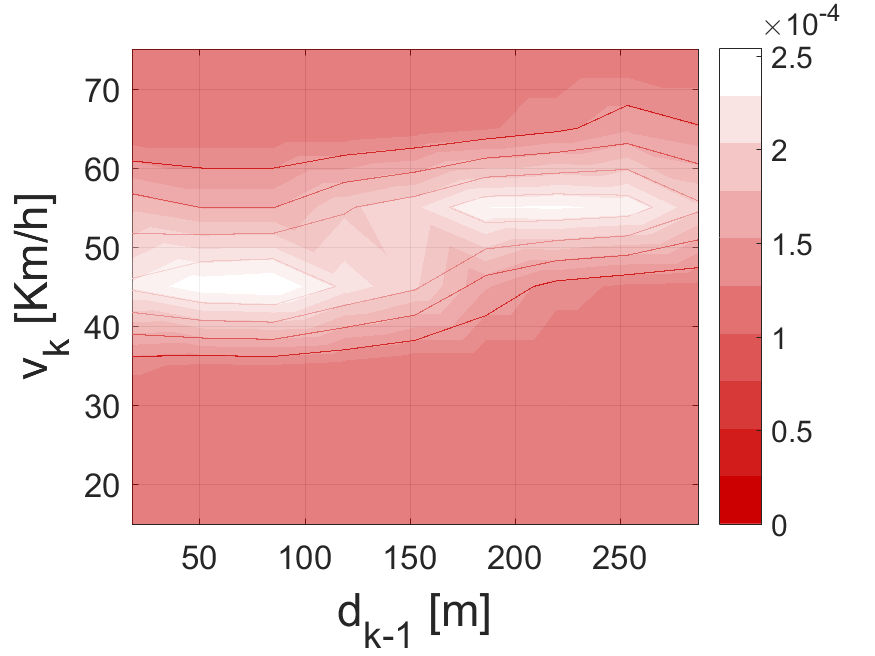}
	\caption{{\em Heat-maps} for  $f(\mathbf{x}_{k-1},\mathbf{u}_k)$ (top panel) and $g(\mathbf{x}_{k-1},\mathbf{u}_k)$ (bottom panel).}
	\label{fig:scatter_Closed_loop}
\end{figure} 

\noindent{\bf Synthesis of the control policy.} Given this set-up, we used Algorithm \ref{algo:FPD_Analytical_pseudocode} to solve Problem \ref{prob:Main_Constr_Ctrl} and hence to synthesize from the examples a control policy allowing the car to merge on the highway.  When synthesizing the control policy,  \textcolor{black}{we imposed, at each $k$, the following set of constraints:
\begin{subequations}\label{eqn:example_constraints}
  \begin{align}
&c_{\mathbf{u},j}^k \left[ \tilde{f}_\mathbf{U}^k  \right] = 0,  \ \ j\in\{0,1,2\},
\\
 \intertext{where:}
&   c_{\mathbf{u},0}^k\left[\tilde{f}_\mathbf{U}^k\right]  = \E_{\tilde{f}_\mathbf{U}^k }
\left[\mathds{1}_{\mathcal{U}_k } (\mathbf{U}_k)\right] - H_{\mathbf{u},0}^k, \\ 
& c_{\mathbf{u},1}^k\left[\tilde{f}_\mathbf{U}^k\right]  = \E_{\tilde{f}_\mathbf{U}^k }
\left[U_k\right] - H_{\mathbf{u},1}^k, \\ 
& c_{\mathbf{u},2}^k\left[\tilde{f}_\mathbf{U}^k\right]  = \E_{\tilde{f}_\mathbf{U}^k }
\left[U_k^2\right] - H_{\mathbf{u},2}^k, \\  
   \intertext{with:}
   & H_{\mathbf{u},0}^k  =1,\\
   &  H_{\mathbf{u},1}^k  =\E_{\tilde{g}_\mathbf{U}^k }
\left[U_k\right],\\
& H_{\mathbf{u},2}^k =4\left(\E_{\tilde{g}_\mathbf{U}^k }
\left[U_k^2\right]-\E_{\tilde{g}_\mathbf{U}^k }^2
\left[U_k\right]\right)+\E_{\tilde{g}_\mathbf{U}^k }^2
\left[U_k\right].
     \end{align}
\end{subequations}
At each $k$, the fulfillment of the first constraint, corresponding to $j=0$, guarantees that the solution to the problem is a pdf (this is  the normalization constraint).  Instead, the fulfillment of the other two constraints in (\ref{eqn:example_constraints}) guarantees that, at each $k$: (i) the expected value of the control variable of the closed loop system is the same as the one seen in the example dataset (constraint corresponding to $j=1$); and (ii) the variance of the control variable of the closed loop system is $4$ times the variance of the control variable seen in the example dataset (constraint corresponding to $j=2$).  Making the variance of the control variable of the closed loop system larger than the control variable from the example dataset  corresponds in accounting for a reduced liability on the example dataset, allowing the closed loop system to depart from the behavior seen in the examples. } \textcolor{black}{Moreover, we note that the constraints in (\ref{eqn:example_constraints}) satisfy  Assumption \ref{asm:constrraints}. Indeed any pdf with the first two moments satisfying the constraints fulfills the assumption.} We used Algorithm \ref{algo:FPD_Analytical_pseudocode} to compute the solution to Problem \ref{prob:Main_Constr_Ctrl} and approximated these pdfs via the Maximum Entropy Principle. This resulted into the truncated Gaussians of Figure \ref{fig:control}. In such a figure, it is clearly shown how these pdfs have higher variance than the corresponding $\tilde{g}_{\mathbf{U}}^k$ extracted from the examples. In the figure, for clarity,  $\tilde{g}_{\mathbf{U}}^k$ is also represented as a truncated Gaussian that has the same mean and variance as the pdf extracted from the data. At each time-step, the control input,  $\mathbf{u}_k$, applied to the car was  obtained by sampling from the pdfs of Figure \ref{fig:control} (in green). In particular,  by sampling the $\mathbf{u}_k$'s as the mean value of the random variable generated by these pdfs, we obtained the speed profile for the controlled car illustrated in Figure \ref{fig:CL_control}.

\begin{figure}[htbp]
	\centering	
	\includegraphics[width=0.9\columnwidth]{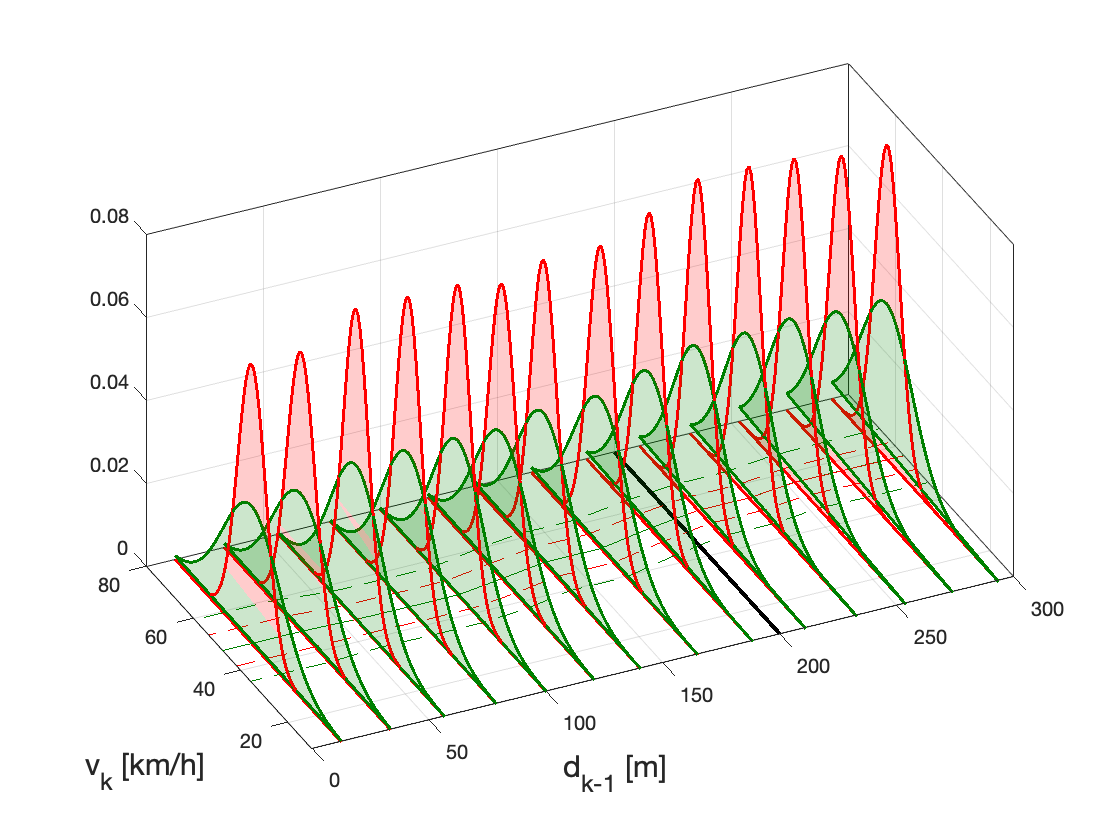}
	\caption{{The pdfs $\tilde{g}_{\mathbf{U}}^k$  (in red) together with the  pdfs obtained from Algorithm \ref{algo:FPD_Analytical_pseudocode} (green). For the sake of clarity in the figure, the pdfs are not shown for each iteration (the policies shown here are representative for all the other time-steps). The continuous line on the speed/distance plane denotes the expectation of the pdfs/policies, while the dashed lines represent {the confidence interval corresponding to the standard deviation} from the examples (red) and from Algorithm \ref{algo:FPD_Analytical_pseudocode} (green). These are shown for each time-step. Colors online.}}
	\label{fig:control}
\end{figure}

\begin{figure}[htbp]
	\centering	
	\includegraphics[width=0.9\columnwidth]{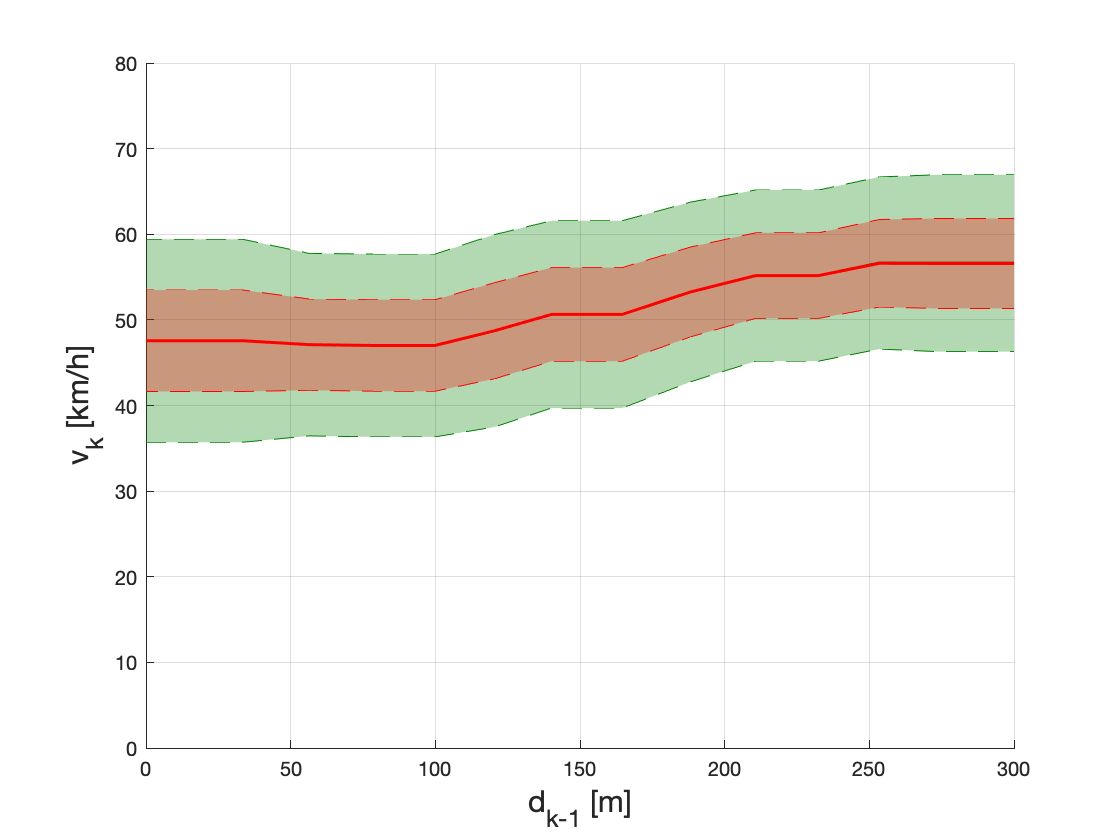}
	\caption{{In green: speed profile from $\left( \tilde{f}_\mathbf{U}^k \right)^\ast$. The bold line is the average speed profile and the shaded areas represent the confidence intervals corresponding to the standard deviation.  \textcolor{black}{For comparison, the corresponding speed profile from the example dataset is also shown in red.  As expected, the bold red line overlaps with the bold green line.  Colors online.}}
	}
	\label{fig:CL_control}
\end{figure}

\section{Conclusions}\label{sec:conclusions}
{We considered the problem of synthesizing control policies from noisy example datasets for systems affected by actuation constraints. To tackle this problem, we introduced a number of technical results to explicitly compute the policy directly from certain pdfs obtained from the data and in compliance with the constraints.  The optimal policy obtained with our results allows to approximate the behavior seen in the examples, while simultaneously fulfilling the system-specific actuation \textcolor{black}{constraints}. The results were also turned into an algorithmic procedure and their effectiveness was illustrated via a use-case. The use-case involved the synthesis, from measured data collected during test drives, of a control policy allowing a car to merge on a highway. \textcolor{black}{We are currently investigating whether our methodology can be extended to consider other divergences \citep{BASSEVILLE2013621} rather than the KL-divergence in the cost  of Problem \ref{prob:Main_Constr_Ctrl}}. Our future work will be aimed at extending the results presented in this paper by considering: (i) the introduction of  chance constraints on the state variable; \textcolor{black}{(ii) cost functionals that do not only aim at tracking the behavior in the examples but also minimize additional costs}; \textcolor{black}{(iii) the use of concepts from data informativity and optimal experimental design  to obtain sufficiently {\em informative} data for the framework developed here. Finally,  we will explore the possibility of devising {\em automated} fitting procedures to extract suitable pdfs from the data in order to enable an end-to-end pipeline for our results. }

\section*{Acknowledgments}
The authors wish to acknowledge Dr. Herzallah (Aston University) for her comments on an earlier version of this manuscript during her visit at UCD, Dr. Guy and Prof. K{\'a}rn{\'y} (both at the Institute of Information Theory and Automation at the  Czech Academy of Sciences) for the insightful discussions on the results.  GR would also like to thank Prof. Bullo at UCSB for reading an earlier version of this work.  {Five anonymous referees made several helpful comments and suggestions, which led to improvements over the originally submitted manuscript.}
\appendix
\section{Appendix: proofs of the technical results}
\subsection{Proof of Property \ref{proper:KLsplit}}
To prove this result we start from the definition of \KL-divergence. In particular:
\begin{equation*}
	\begin{split}
&		\mathcal{D}_{\KL}
		\left( 
		\phi \left( \mathbf{y},\mathbf{z} \right) || 
		g\left( \mathbf{y},\mathbf{z} \right) 
		\right)  := 	\int
		\int
		\phi \left( \mathbf{y},\mathbf{z} \right)
		\left[
		\ln \frac{ \phi \left( \mathbf{y},\mathbf{z} \right)}
		{g\left( \mathbf{y},\mathbf{z} \right) } 
		\right]
		\,d\mathbf{y}
		\,d\mathbf{z}	 \\
		& =
		\underbrace{	
			\int
			\int
			\phi \left( \mathbf{z}|\mathbf{y} \right)
			\left[
			\phi \left( \mathbf{y}\right) 
			\ln 
			\frac{
				\phi \left( \mathbf{y}\right) 
			}{
				g \left( \mathbf{y}\right) 
			} 
			\right]
			\, d\mathbf{y}
			\, d\mathbf{z}
		}_{(1)}	\;	 \\
		& 	+
		\underbrace{
			\int 
			\int
			\phi \left( \mathbf{y}\right) 
			\left[
			\phi \left( \mathbf{z}|\mathbf{y} \right)	
			\ln \frac{ 	\phi \left( \mathbf{z}|\mathbf{y} \right)
			}{
				g \left( \mathbf{z}|\mathbf{y} \right)} 
			\right] 
			\, d\mathbf{z}
			\, d\mathbf{y}
		}_{(2)}.	
	\end{split}
\end{equation*}
For the term $(1)$ in the above expression we may continue as follows: 
$		\int
		\int
		\phi \left( \mathbf{z}|\mathbf{y} \right)
		\left[
		\phi \left( \mathbf{y}\right) 
		\ln 
		\frac{
			\phi \left( \mathbf{y}\right) 
		}{
			g \left( \mathbf{y}\right) 
		} 
		\right]
		\, d\mathbf{y}
		\, d\mathbf{z}  = 
		\int
		\phi \left( \mathbf{z}|\mathbf{y} \right)
		\, d\mathbf{z}
		\left[\int
		\phi \left( \mathbf{y}\right) 
		\ln 
		\frac{
			\phi \left( \mathbf{y}\right) 
		}{
			g \left( \mathbf{y}\right) 
		} 
		d\mathbf{y}\right] =
		\mathcal{D}_{\KL} \left(\phi \left( \mathbf{y}\right)  || g \left( \mathbf{y}\right)  \right)$,
where we used Fubini's theorem, the fact that the term on the first line in square brackets is \textcolor{black}{independent} on $\mathbf{Z}$ and the fact that $\int
		\phi \left( \mathbf{z}|\mathbf{y} \right)
		\, d\mathbf{z} =1$. By using again Fubini's theorem, for the term $(2)$ in the above expression instead we have:
$		\int 
		\int
		\phi \left( \mathbf{y}\right) 
		\left[
		\phi \left( \mathbf{z}|\mathbf{y} \right)	
		\ln \frac{ 	\phi \left( \mathbf{z}|\mathbf{y} \right)
		}{
			g \left( \mathbf{z}|\mathbf{y} \right)} 
		\right] 
		\, d\mathbf{z}
		\, d\mathbf{y}
= \mathbb{E}_{\phi}
		\left[	
		\mathcal{D}_{\KL} \left( 	\phi(\mathbf{z}|\mathbf{Y}) || \, g(\mathbf{z}|\mathbf{Y}) \right)
		\right]$,
thus proving the result.
\qed

\subsection{Proof of Lemma \ref{lem:Constrained_KL}}
	
The proof is organized in $3$ steps. 
In {\bf Step 1} we show that the optimization problem in \eqref{eq:probl_Lemma_1} is a convex optimization problem (COP) and we then devise its augmented Lagrangian.  In {\bf Step 2} we explicit the Karush-Kuhn-Tucker (KKT) conditions and verify that these are satisfied by the solution in (\ref{eq:Lem1_opt_fu_sol}). Recall that, for a COP, KKT conditions are \textcolor{black}{necessary} and sufficient\citep[Chapter $5$]{Boyd_S_Vanderberge_L_2004_Book_Convex_Optim}.
Finally, in {\bf Step 3}, we compute the minimum of the cost function corresponding to the optimal solution.

{\bf Step 1.} We start with observing that the cost function $\mathcal{L}\left(\textcolor{black}{f(\mathbf{z})}\right)$ in (\ref{eq:probl_Lemma_1}) can be conveniently rewritten as $\mathcal{L}\left(\textcolor{black}{f(\mathbf{z})}\right) = \int l\left(\textcolor{black}{f(\mathbf{z})}\right) \,d\mathbf{z}$,
with $	l\left(\textcolor{black}{f(\mathbf{z})}\right) := \textcolor{black}{f(\mathbf{z})} \; \left[ \ln \left( \frac{\textcolor{black}{f(\mathbf{z})}}{\textcolor{black}{g(\mathbf{z})}} \right) + \alpha\textcolor{black}{\left(\mathbf{z}\right)} \right]$.
Clearly, $\mathcal{L}(\cdot)$ is twice differentiable and we now prove that it is also a {strictly} convex functional in $f$. We do this by showing that its second variation is positive definite on the space of integrable functions and we explicit the first and the second variation of $\mathcal{L}\left(\textcolor{black}{f(\mathbf{z})}\right)$, i.e. $\delta\mathcal{L}(f,\delta f)$ and $\delta^2 \mathcal{L}(f,\delta f)$, in terms of the first and second derivative of $l\left(\textcolor{black}{f(\mathbf{z})}\right)$ with respect to $\textcolor{black}{f(\mathbf{z})}$
(i.e. $\frac{\partial l\left(\textcolor{black}{f(\mathbf{z})}\right)}{\partial f}$ and $\frac{\partial^2 l\left(\textcolor{black}{f(\mathbf{z})}\right)}{\partial f^2}$, respectively). By computing $\delta\mathcal{L}(f,\delta f)$ we get $\delta \mathcal{L} \left( f,\delta f \right) 
=  \int  \frac{\partial l \left(\textcolor{black}{f(\mathbf{z})}\right)}{\partial f}  \delta f \, d\mathbf{z}$,
with $\frac{\partial l\left(\textcolor{black}{f(\mathbf{z})}\right)}{\partial f}  = \ln \textcolor{black}{f(\mathbf{z})} + \left( \alpha(\mathbf{z}) + 1 -\ln \textcolor{black}{g(\mathbf{z})} \right)$. This leads to the following expression for the second variation of $\mathcal{L}(f,\delta f)$ 
{\begin{equation}\label{eqn:secondvar}
\delta^2 \mathcal{L} \left( f,\delta f \right) 
= \int  \delta f \, \left( \frac{\partial^2 l\left(\textcolor{black}{f(\mathbf{z})}\right)}{\partial f^2} \right) \, \delta f \, d\mathbf{z}
\end{equation}
To show convexity of $\mathcal{L}$ it then suffices to observe that, since $f(\mathbf{z})$ is \textcolor{black}{non-negative} on its support, then 
the quantity under the integral in (\ref{eqn:secondvar}) is positive.} 
In turn, this implies  that $\delta^2 \mathcal{L} \left( f,\delta f \right)$ is strictly positive for any measurable, non-zero variation $\delta f$ (see also \citep[Chapter $4$]{Kirk_DE_2004_OptimControl} for a detailed discussion). Hence, in order to conclude that the problem in \eqref{eq:probl_Lemma_1} is a COP, it suffices to observe that the constraints in \eqref{eq:constr_c_j} are linear in $f(\mathbf{z})$. The augmented Lagrangian of the COP in \eqref{eq:probl_Lemma_1} is: 
\begin{equation}
\label{eq:L_aug_std}
\begin{split}
\mathcal{L}_{aug} \left( \textcolor{black}{f(\mathbf{z})}, \boldsymbol{\lambda} \right) := 
	& \mathcal{L} \left( \textcolor{black}{f(\mathbf{z})} \right) 	
	+ \sum_{j \in \mathcal{E}_0 \cup  \mathcal{I}}  
	\lambda_j \, c_j \left[\textcolor{black}{f(\mathbf{z})}\right],	 	
\end{split}
\end{equation}
where $\boldsymbol{\lambda}:= [\lambda_0,\lambda_1,\dots,\lambda_{n_e + n_{\textcolor{black}{{l}}}}]^T$ is the column vector stacking all the  \textcolor{black}{Lagrange multipliers}.

{\bf Step 2.} We showed that the problem in (\ref{eq:probl_Lemma_1}) is a COP and hence the KKT conditions are necessary and sufficient optimality conditions. That is, in order to be a unique minimizer of the problem, the candidate function $f(\mathbf{z})$ must satisfy the conditions made explicit in Table \ref{tab:KKT_cond_stack}.
\begin{table}[t!]
  	\small
  	\begin{center}
 	 \begin{tabular}{lll}    		
      \textit{Primal feasibility}:      	& $c_j \left[f(\mathbf{z})\right] = 0$,  		& $\forall j \in \mathcal{E}_0$	\\
       										& $c_j \left[f(\mathbf{z})\right]	\leq 0$, 	& $\forall j \in \mathcal{I}$	\\
      \hline
      \textit{Dual feasibility}: 			& $\lambda_j \geq 0,$ 	& $\forall j \in \mathcal{I}	$\\
      \hline
      \textit{Complementary slackness}: 	& $\lambda_j \, c_j \left[f(\mathbf{z})\right] = 0,$ & $ \forall j \in \mathcal{I}$\\
      \hline
      \textit{Stationarity}: 				& $\delta \mathcal{L}_{aug} \left( f,\delta f, \boldsymbol{\lambda} \right) = 0,$ & $\forall \,\delta f$\\
    \end{tabular}	
      		\caption{KKT conditions for the problem \textcolor{black}{in} \eqref{eq:probl_Lemma_1}.}
  	    \label{tab:KKT_cond_stack}
\end{center}
\end{table}
We now impose the {stationarity} condition (see Table \ref{tab:KKT_cond_stack}) and first note that the augmented Lagrangian \eqref{eq:L_aug_std} can be written as: $
\mathcal{L}_{aug} \left( \textcolor{black}{f(\mathbf{z})}, \boldsymbol{\lambda} \right) :=  \int \textcolor{black}{f(\mathbf{z})}\; 
\left[
\ln
\left(
\frac{\textcolor{black}{f(\mathbf{z})}}{
\textcolor{black}{g(\mathbf{z})}}
\right)
+ 
\alpha(\mathbf{z})
\right] \,
d\mathbf{z} +
\langle 
\boldsymbol{\lambda},
\int  \textcolor{black}{f(\mathbf{z})} \; 
\mathbf{h}\left( \mathbf{z} \right)
d\mathbf{z} 
-\mathbf{H}
\rangle $.
Hence:
\begin{equation}
\label{eqn:lagrangian_Lema_1}
\begin{array}{ll}
\mathcal{L}_{aug}
\left(  \textcolor{black}{f(\mathbf{z})}, \boldsymbol{\lambda} \right)  =
\int 
\tilde{l} \left(  \textcolor{black}{f(\mathbf{z})},	\boldsymbol{\lambda}\right)
d\mathbf{z}	- \langle\boldsymbol{\lambda},\mathbf{H}\rangle,
\end{array}
\end{equation}
where the quantity under the integral is given by $\tilde{l} \left(  \textcolor{black}{f(\mathbf{z})},	\boldsymbol{\lambda}\right)	:=
 \textcolor{black}{f(\mathbf{z})} \; 
\left[
\ln
\left(
\frac{ \textcolor{black}{f(\mathbf{z})}}{
	 \textcolor{black}{g(\mathbf{z})}}
\right)
+ \alpha(\mathbf{z}) +  \langle \boldsymbol{\lambda}, \mathbf{h}(\mathbf{z}) \rangle
\right]$.
By computing the first variation of \eqref{eqn:lagrangian_Lema_1} with respect to $\delta f$ we obtain
$
\delta \mathcal{L}_{aug} \left( f,\delta f, \boldsymbol{\lambda} \right) 
=  \int  \frac{\partial  \tilde{l}\left( \textcolor{black}{f(\mathbf{z})},\boldsymbol{\lambda}\right) }{\partial f}  \delta f  \, d\mathbf{z}$,
and thus, by imposing  the stationarity  condition (i.e. $\delta \mathcal{L}_{aug} \left( f, \delta f, \boldsymbol{\lambda} \right)  = 0$, $\forall \delta f$), we get $\frac{\partial \tilde{l}( \textcolor{black}{f(\mathbf{z})},\boldsymbol{\lambda})}{\partial f }  = 0$. That is,
$\frac{\partial  \tilde{l}\left( \textcolor{black}{f(\mathbf{z})},\boldsymbol{\lambda}\right)}{\partial f }  
= \ln \left( \frac{ \textcolor{black}{f(\mathbf{z})}}{ \textcolor{black}{g(\mathbf{z})}} \right) + {\alpha}\left(\mathbf{z}\right) +\langle \boldsymbol{\lambda},\mathbf{h}(\mathbf{z})\rangle + 1 = 0$, 
from which it immediately follows that all the optimal solution candidates must be of the form
\begin{equation}
\label{eq:f_Laug_staz_candidate}
\hat{f}^\ast \left(\mathbf{z} ,\boldsymbol{\lambda} \right):= g(\mathbf{z})
	e^{-\lbrace 1+ \alpha(\mathbf{z}) + \langle \boldsymbol{\lambda},\mathbf{h}\left( \mathbf{z} \right) \rangle \rbrace}.
\end{equation}
In the above expression, the notation $\hat{f}^* \left(\mathbf{z} ,\boldsymbol{\lambda} \right)$ was introduced to stress that the optimal solution candidate is a function of the  \textcolor{black}{Lagrange multipliers}.  {These can be found by solving the following dual problem
\begin{equation}
\label{eq:probl_dual_Lemma_1}
\begin{array}{lllll} 
\boldsymbol{\lambda}^\ast \in \arg 
	&  \underset{\boldsymbol{\lambda}}{ \max } 	
		& \mathcal{L}^D \left(\boldsymbol{\lambda}\right) \\	
	&	\text{s.t.:} 								
			&  \lambda_j \text{ free }, 	
				& \forall j \in \mathcal{E}_0,  \\
	& 
		& \lambda_j \geq 0,  		
			& \forall j \in \mathcal{I} \\ 	
\end{array} 
\end{equation}
}
{choosing $\boldsymbol{\lambda}^\ast$ so that $\hat{f}^\ast \left(\mathbf{z} ,\boldsymbol{\lambda}^\ast \right)$ is primal feasible} (see  \citep{Roc_88}). In the  problem, $\mathcal{L}^{D}
\left( \boldsymbol{\lambda} \right)$ is the Lagrange dual function $	\mathcal{L}^{D}
	\left( \boldsymbol{\lambda} \right)
	:=  \underset{\textcolor{black}{f(\mathbf{z})}\geq 0 }{ \inf} 	 \mathcal{L}_{aug}
\left(\textcolor{black}{f(\mathbf{z})}, \boldsymbol{\lambda} \right)$.
Note that the vector $\boldsymbol{\lambda}^\ast$ must satisfy the dual feasibility condition. Now, Assumption \ref{asm:constrraints} implies strong duality (see Remark \ref{rem:duality}) and hence  the complementary slackness condition (see Table \ref{tab:KKT_cond_stack}) is also fulfilled. 
Additionally, $\mathcal{L}_{aug}\left(\textcolor{black}{f(\mathbf{z})}, \boldsymbol{\lambda} \right)$ is strictly convex in $f(\mathbf{z})$ and hence $\underset{f(\mathbf{z})\geq 0 }{ \inf} \mathcal{L}_{aug} \left(\textcolor{black}{f(\mathbf{z})}, \boldsymbol{\lambda} \right) = \mathcal{L}_{aug} \left(\hat{f}^\ast \left(\mathbf{z} ,\boldsymbol{\lambda} \right), \boldsymbol{\lambda} \right)$. 
Thus:
{\begin{equation}
\label{eq:Lagr_dual}
\begin{split}
 \mathcal{L}^{D}
\left( \boldsymbol{\lambda} \right) & =
\mathcal{L}_{aug} 
\left( 
\hat{f}^* \left(\mathbf{z}, \boldsymbol{\lambda} \right)
,\boldsymbol{\lambda}
\right)  = 
-\int 
\hat{f}^* \left(\mathbf{z}, \boldsymbol{\lambda} \right)
d\mathbf{z} 
- \langle\boldsymbol{\lambda},\mathbf{H}\rangle \\
& = 
-\int 
g  \left(\mathbf{z}\right)  \;
e^{-\lbrace 1+ \alpha(\mathbf{z}) + \langle \boldsymbol{\lambda},\mathbf{h}\left( \mathbf{z} \right) \rangle \rbrace}
d\mathbf{z} 
- \langle\boldsymbol{\lambda},\mathbf{H}\rangle.
\end{split}
\end{equation}}
{Note now that the last equivalence gives  \eqref{eq:Lagr_dual_final} in the statement of the lemma and hence the problem in (\ref{eq:probl_dual_Lemma_1_statement}).} {Moreover, the complementary slackness condition on the pair of optimizers $f^\ast(\mathbf{z}),\boldsymbol{\lambda}^\ast$ implies, for a COP, that there is no duality gap. That is, $\mathcal{L}^D\left( \boldsymbol{\lambda}^\ast \right) = \mathcal{L} \left( f^\ast(\mathbf{z}) \right)$.} {In turn, this  means that the  \textcolor{black}{Lagrange multipliers} associated to inactive inequality constraints must be equal to $0$, while all the  \textcolor{black}{Lagrange multipliers} associated to active inequality constraints must be non-negative. } Therefore, the optimal solution of the COP in \eqref{eq:probl_Lemma_1} is given by
$\hat{f}^\ast \left(\mathbf{z} , \boldsymbol{\lambda}^\ast \right) = f^\ast(\mathbf{z})
= 	
g(\mathbf{z}) \, e^{- 
		\left \lbrace
		1+ \alpha \left( \mathbf{z} \right) +
		\sum_{j \in \textcolor{black}{\mathcal{I}_a} \left( \textcolor{black}{f^\ast(\mathbf{z})}  \right) }
		\lambda_j^\ast h_j \left( \mathbf{z} \right)
		\right \rbrace 
	}$,
{which was obtained by taking into account that only the  \textcolor{black}{Lagrange multipliers} associated to the active constraints are non-zero. } {The above expression is equal to \eqref{eq:Lem1_opt_fu_sol} where we highlighted the role of $\lambda_0$ as a normalization constant.
This concludes the proof of {\bf (R1)}.}

{\bf Step 3.} Finally, since there is no duality gap, the minimum value of the primal problem (i.e. the COP in (\ref{eq:probl_Lemma_1})) can be obtained from \eqref{eq:Lagr_dual}. 
This leads to
$\mathcal{L} \left(\textcolor{black}{f^\ast(\mathbf{z})}  \right)  
= - \left( 
1 + \sum_{j \in \textcolor{black}{\mathcal{I}_a} \left( \textcolor{black}{f^\ast(\mathbf{z})}  \right) } 
\lambda^\ast_j \, H_j	
\right)$,
and thus completes the proof. \qed

\end{document}